\documentclass{amsart}

\usepackage{amsmath}
\usepackage{amsopn}
\usepackage[british]{babel}
\usepackage{amssymb}
\usepackage{graphicx}
\usepackage{epstopdf}
\usepackage{color}
\usepackage{tikz}
\usepackage{diagbox}
\usepackage{pgfplots}
\usepackage{enumitem}
\usepackage{hyperref}
\usepackage{fp-eval}
\usepackage{esint}
\usepackage{subfigure}
\usepackage{caption}
\usepackage{mathabx}
\usepackage{booktabs}
\usepackage{todonotes}
\usepackage{subdepth}
\usepackage{mathtools}
\usepackage{bm}
\usepackage{cleveref}

\definecolor{darkblue}{rgb}{0,0,.7}
\hypersetup{colorlinks=true,linkcolor=darkblue,citecolor=darkblue}

\newlist{alphenum}{enumerate}{1}
\setlist[alphenum]{fullwidth,label={(\alph*)}}

\allowdisplaybreaks[0]
\theoremstyle{plain}
\newtheorem{theorem}{Theorem}
\newtheorem{lemma}[theorem]{Lemma}

\newtheorem{definition}[theorem]{Definition}
\theoremstyle{remark}
\newtheorem{remark}[theorem]{Remark}

\newcommand{\vecb}[1]{\boldsymbol{#1}}
\newcommand{\transformed}[1]{\widehat{#1}}

\renewcommand{\div}{\operatorname{div}}

\newcommand{\dom}{\transformed{\Omega}}
\newcommand{\axidom}{\Omega}

\newcommand{\f}{\bm{f}}
\newcommand{\g}{\bm{g}}
\newcommand{\bpsi}{\bm{\psi}}
\renewcommand{\u}{\bm{u}}
\renewcommand{\v}{\bm{v}}
\newcommand{\z}{\bm{z}}
\newcommand{\w}{\bm{w}}
\renewcommand{\bf}{\bm{f}}

\newcommand{\V}{\bm{V}}

\newcommand{\ds}{~\!\mathrm{d}s}

\newcommand{\drdz}{~\!\mathrm{d}r\mathrm{d}z}
\newcommand{\drda}{~\!\mathrm{d}r\mathrm{d}\alpha}
\newcommand{\dxtrans}{~\!\mathrm{d}\transformed{\bm{x}}}

\newcommand{\GammaR}{\Gamma_{\! \text{rot}}}
\newcommand{\TR}{\mathcal{T}_{\text{rot}}}

\newcommand{\Cerr}{\mathcal{R}_{\Pi}}

\newcommand{\BR}{BR }

\usepackage{tikz}
\usetikzlibrary{cd,patterns,calc,fit,backgrounds}
\usepackage{pgfplots,pgfplotstable}
\usepgfplotslibrary{colorbrewer,groupplots}
\usepackage{caption}

\title[Pressure-robustness for the axisymmetric Stokes problem]{Pressure-robustness for the axisymmetric Stokes problem by velocity reconstruction
}

\author{P.L.~Lederer}
\thanks{Department of Mathematics, University of Hamburg, Bundesstra{\ss}e 55, D-20146 Hamburg, Germany
  (\href{mailto:philip.lederer@uni-hamburg.de}{philip.lederer@uni-hamburg.de}).}
\author{C.~Lehrenfeld}
\thanks{Institute for Numerical and Applied Mathematics, University of Göttingen, Lotzestraße 16-18, D-37083 Göttingen, Germany
  (\href{mailto:lehrenfeld@math.uni-goettingen.de}{lehrenfeld@math.uni-goettingen.de}, \href{mailto:t.beeck@math.uni-goettingen.de}{t.beeck@math.uni-goettingen.de}).}
\author{C.~Merdon}
\thanks{Weierstrass Institute for Applied Analysis and Stochastics (WIAS),  Anton-Wilhelm-Amo-Straße 39, D-10117 Berlin, Germany
(\href{mailto:christian.merdon@wias-berlin.de}{christian.merdon@wias-berlin.de}).}
\author{T.~van Beeck}

\subjclass[2020]{65N30, 76M10, 65N12, 76D07}
\keywords{Axisymmetric Stokes problem, Pressure-robustness, Bernardi--Raugel finite element method, Mass conservation}

\ifpdf
\hypersetup{
  pdftitle={Pressure-robustness for the axisymmetric Stokes problem by velocity reconstruction},
  pdfauthor={P.L.~Lederer, C.~Lehrenfeld, C.~Merdon, and T.~van Beeck},
}
\fi

\begin{document}

\begin{abstract}
This paper studies pressure-robust discretizations of the axisymmetric Stokes problem. After the transformation to cylindrical coordinates, the \emph{radially weighted} discrete velocity describing the mass flux has to be divergence-free in the classical sense to guarantee pressure-robustness. Classical divergence-free finite element methods from the Cartesian setting do not satisfy this condition in general and are thus not pressure-robust in the axisymmetric setting, even though they remain inf-sup stable.

We explore an approach that restores pressure-robustness via reconstruction operators for a low-order Bernardi--Raugel discretization. Applying standard interpolation operators from the Cartesian setting to radially weighted test functions works in principle, but the resulting operator lacks the properties needed to derive optimal consistency error estimates.
Therefore, we introduce a reconstruction operator mapping into a finite element space spanned by Raviart--Thomas functions modified to vanish on the rotation axis. This vanishing-on-axis property is the key to obtaining optimal consistency error estimates. Numerical examples confirm the theoretical results and include cases where the modified reconstruction operator yields significantly better results.
\end{abstract}

\maketitle

\section{Introduction}
Pressure-robust discretizations of the incompressible (Navier--)Stokes equations have become a major point of focus in recent years \cite{JLMNR:2017}. For example, such discretizations can be based on finite elements providing exactly divergence-free velocities, such as the Scott--Vogelius element \cite{SV85,Zhang:2011b} on suitable meshes or $H(\mathrm{div})$-conforming (hybrid) discontinuous Galerkin methods \cite{MR2304270, LS16, MR4122492, MR3833698}. An alternative approach is the use of a velocity reconstruction operator \cite{Linke2014, LM:2016, LLMS2017, JLMNR:2017} applied to test functions.
All of these schemes work in the two- and three-dimensional Cartesian setting. 

When the domain, data, and solution are rotationally invariant, the original three-dimensional problem can be reduced to a two-dimensional axisymmetric formulation, which can be solved at a significantly lower computational cost. 
Unfortunately, without further modifications, classical pressure-robust schemes are no longer pressure-robust under the associated change of variables. 
To be precise, let $\axidom$ denote the two-dimensional meridional $(r,z)$-domain and let $\dom$ be the three-dimensional domain obtained by revolving $\axidom$ about the rotation axis. 
We consider the three-dimensional Stokes problem: find $\u : \dom \subset \mathbb{R}^3 \to \mathbb{R}^3$ such that 
\begin{alignat*}{2}
  - \nu \Delta \u + \nabla p &= \bf &&\quad \text{ in } \dom, \\
  \div \u &= 0 &&\quad \text{ in } \dom, \\
  \u &= 0 &&\quad \text{ on } \partial \dom.
\end{alignat*}
Transforming to cylindrical coordinates $(r,\theta,z)$ and assuming that the problem is axisymmetric, that is, independent of $\theta$, we obtain a mixed problem of the form: find $\u \in \V$ and $p \in Q$ --- with $\V$ and $Q$ being appropriate function spaces for velocity and pressure on $\Omega$, respectively --- such that 
\begin{align*}
\nu a(\u,\v) + b(p,\v) & = (\bf, r \v)_{L^2} &\hspace*{-1.5cm}&  \text{for all } \v \in \V,\\
         b(q,\u) & = 0 &\hspace*{-1.5cm}& \text{for all } q \in Q,
\end{align*}
with the standard $L^2$-inner product $(\cdot, \cdot)_{L^2}$ (that is, without a weighting factor) over $\axidom$ and the bilinear forms 
\begin{align*}
a(\u,\v) := (r \nabla_{(r,z)} \u, \nabla_{(r,z)} \v)_{L^2} + (r^{-1} u_r, v_r)_{L^2}, \quad 
b(q,\v) := -(\mathrm{div}_{(r,z)}(r \v), q)_{L^2}.
\end{align*}
Here, the operators $\nabla_{(r,z)}$ and $\mathrm{div}_{(r,z)}$ denote the standard gradient and divergence with respect to the $(r,z)$ coordinates and $\cdot_r$ refers to the radial component, see e.g., \cite{BDM99} for a proper derivation of the above formulation. The forms $a(\cdot,\cdot)$ and $b(\cdot,\cdot)$ differ from the usual Stokes forms in Cartesian coordinates.
At first glance, the change looks minor, but the divergence operator's structure changes significantly under the cylindrical-coordinate transform. For instance, because of the additional radial factor $r$ one would now require $\mathrm{div}_{(r,z)}(r\V_h)\subseteq Q_h$ to obtain an \emph{exactly} divergence-free discretization. Consequently, well-known exactly divergence-free velocity ansatz spaces, e.g., those provided by the Scott--Vogelius finite element, do not satisfy this property and therefore do not yield a pressure-robust method; we illustrate this in Figure~\ref{fig:PRFailure}. A related phenomenon was observed in \cite{QRV21} for an acoustic eigenvalue problem, where the axisymmetric transform produces spurious eigenmodes.

\begin{figure}[!htbp]
  \centering
  \captionsetup{singlelinecheck=off}
  \includegraphics[width=0.97\textwidth]{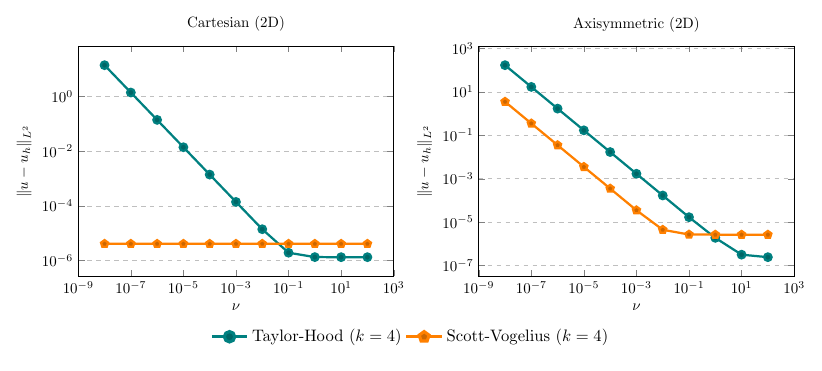}
  \caption{Illustrating pressure-robustness for the Taylor--Hood and Scott--Vogelius pairs in Cartesian (left) and axisymmetric (right) settings for right-hand sides $\bf=-\nu\Delta \u+\nabla p$. A pressure-robust method yields $\nu$-independent velocity errors, whereas non-robust methods show locking with respect to $\nu$. 
  Taylor--Hood is not pressure-robust and Scott--Vogelius is only pressure-robust in the Cartesian setting.
  In this example, $\bf$ is computed from
  \(\u = \operatorname{curl} (x^2 (x-1)^2 y^2 (y-1)^2)\) and \(p = x^5 + y^5 - \tfrac13\) in the Cartesian case, and as in
  Section~\ref{sec:example2} in the axisymmetric case.}
  \label{fig:PRFailure}
\end{figure}

Another difficulty stems from the $r^{-1}$ weight in the bilinear form $a(\cdot,\cdot)$: they require that the radial component $u_r$ vanishes at the rotation axis, which is natural for radially symmetric flows but complicates discretizations with $H(\mathrm{div})$-conforming ansatz spaces. 
To the best of the authors' knowledge, only a few exactly divergence-free axisymmetric discretizations on structured meshes are available \cite{BFO96, LZZZ}.

Furthermore, deriving inf-sup stability of a finite element pair from the three-dimensional setting is not straightforward; see \cite{LEE2011, LEE20123500} for the Taylor--Hood and \cite{MR1977001} for the P2-bubble finite element method.

\smallskip
This paper employs a classical low-order Bernardi--Raugel finite element discretization \cite{BR1985} and modifies it with the help of a reconstruction operator in the spirit of \cite{Linke2014, LM:2016, LLMS2017, JLMNR:2017} to achieve pressure-robustness. We present the Bernardi--Raugel pair here as the first axisymmetric pressure-robust scheme on unstructured grids and as a prototype for a broader class of pressure-robust axisymmetric discretizations.

All considered reconstruction operators $\Pi$ map
discretely divergence-free (radially weighted) test functions 
to exactly divergence-free $H(\mathrm{div})$-conforming
ones by utilizing the commutative property
$\mathrm{div}_{(r,z)} \Pi(r\v_h) = \pi_0 \mathrm{div}_{(r,z)} (r\v_h)$.
We discuss two approaches. The first one employs
classical $H(\mathrm{div})$-conforming standard interpolations, that are only guaranteed to work under higher regularity
assumptions on the problem data. These higher regularity assumptions are only realistic if the rotational axis is not part of the boundary of the meridional domain $\Omega$.
To fix this flaw, a second approach employs a modified
$H(\mathrm{div})$-conforming finite element space
in which the tangential component of
the reconstructed functions $\Pi(r \v_h)$ vanishes
at the rotation axis. 
This property allows $L^2_{-1}$ estimates of the reconstructions (where $-1$ indicates the scaling of the radial weight in the $L^2$ inner product) and is the key
for an improved and optimal consistency error estimate.
The theoretical results and the properties of the modified
reconstruction operators are confirmed in several  numerical experiments.
Moreover, the reconstruction operator applied to the
discrete velocity solution is divergence-free (in the sense above)
and can be used in coupled axisymmetric transport simulations to
guarantee maximum principles and mass conservation \cite{FMR26}.

Furthermore, the reconstruction space motivates $H(\mathrm{div})$-conforming (H)DG in the spirit of \cite{MR2304270, LS16, MR4122492, MR3833698}. A thorough investigation of this possibility is the topic
of a forthcoming paper.

\smallskip
The remaining parts of this paper are structured as follows.
Section~\ref{sec:modelproblem} studies the well-posedness
of the axisymmetric Stokes model problem and introduces the
necessary notation. Section~\ref{sec:pressure_robustness}
introduces the concept of pressure-robustness and derives
a general a priori error estimate. In Section~\ref{sec:classical_reconstructions}, we investigate the employment of traditional $H(\mathrm{div})$-conforming
standard interpolations. Section~\ref{sec:modified_reconstruction}
suggests a modification of the reconstruction operator
that allows for an improved consistency error estimate
and optimal a priori velocity error estimates without additional
regularity assumptions. Section~\ref{sec:numerical_examples}
presents several numerical examples to confirm the results.

\section{Model problem and preliminaries}
\label{sec:modelproblem}
This section fixes the notation for the axisymmetric setting and recalls the strong and weak formulation, as well as the Helmholtz decomposition.

\subsection{Axisymmetric geometry}
Let $\axidom \subset \mathbb{R}_{+} \times \mathbb{R}$, where $\mathbb{R}_{+}$ are the non-negative real numbers, be a bounded Lipschitz domain that generates a domain $\dom \subset \mathbb{R}^3$ through rotation around an axis. We denote by $(x,y,z)$ a set of Cartesian coordinates in $\mathbb{R}^3$ and by $(r,\theta,z)$, $r \ge 0$, $\theta \in [0,2\pi]$, the associated cylindrical coordinates such that $x = r \cos \theta$ and $y = r \sin \theta$. We assume that
\begin{equation*}
     \dom = \{ (r \cos \theta, r \sin \theta,z) : (r,z) \in \axidom, \theta \in [0,2\pi] \}
\end{equation*}
{and define the possibly empty set $\GammaR := \{ (r,z) \in \partial \axidom : r = 0 \}$}. We set $\Gamma \coloneqq \partial \axidom \setminus \GammaR$. The geometric setup is visualized in Figure~\ref{fig:GeomSetup}. For any vector, or vector-valued function $\v$, we denote by $(v_x,v_y,v_z)$ and $(v_{r},v_{\theta},v_{z})$ its components in the Cartesian and cylindrical coordinate system, respectively. In this work we only consider the axisymmetric case, i.e., $v_{\theta} = 0$ and all functions are independent of the angular coordinate $\theta$. For simplicity, we then only consider the two-dimensional vector $\v = (v_r,v_z)$ in the $(r,z)$-plane.

\begin{figure}[!htbp]
  \begin{center}
    \includegraphics[width=0.95\textwidth]{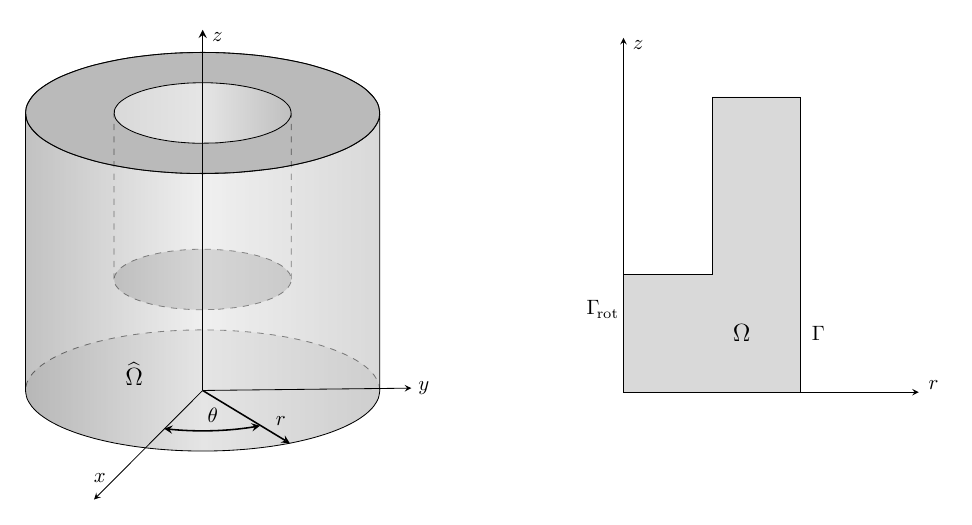}
  \end{center}
  \caption{Geometric setup: A two-dimensional meridional domain $\axidom$ generates a three-dimensional domain $\dom$ by rotation around the axis $\{r = 0\}$. The boundary of $\Omega$ may (left) or may not (right) contain the rotation axis.}
  \label{fig:GeomSetup}
\end{figure}

We mainly focus on the case where $\GammaR \neq \emptyset$, because the modifications of the reconstruction operators in Section~\ref{sec:modified_reconstruction} are only necessary in this case.

\subsection{Strong formulation and differential operators}
Considering homogeneous Dirichlet boundary conditions, the strong form of the axisymmetric Stokes problem seeks $\u = (u_r,u_z)$
and $p$ such that
\begin{alignat*}{2}
 - \nu \Delta_\text{axi} \u + \nabla p &= \bf &&\quad \text{in } \Omega,\\
                        \mathrm{div}_\text{axi} \u &= 0 &&\quad \text{in } \Omega,\\
                        \u &= 0 &&\quad \text{on } \Gamma,
\end{alignat*}
and involves the axisymmetric vector Laplacian and divergence operator
\begin{align*}
   \Delta_\text{axi} \u :=   \begin{pmatrix}
    \Delta_{(r,z)} u_r + \frac{1}{r} \partial_r u_r - r^{-2} u_r\\
    \Delta_{(r,z)} u_z + \frac{1}{r} \partial_r u_z
  \end{pmatrix} 
  \quad \text{and} \quad
  \mathrm{div}_\text{axi} :=  \mathrm{div}_{(r,z)}(\v) + r^{-1} v_r.
\end{align*}
Here and throughout
\begin{align*}
\Delta_{(r,z)} v := \partial^2_r v + \partial^2_z v, \quad
\nabla_{(r,z)} \v ~\! \vecb{e}_s:= \partial_s \v, s \in \{r,z\}, \text{ and }
\mathrm{div}_{(r,z)} \v := \partial_r v_r + \partial_z v_z,
\end{align*}
with the Jacobian matrix $\nabla_{(r,z)} \v$,
denote the classical differential operators known from the Cartesian setting but with respect to the cylindrical coordinates $(r,z)$. Note that non-homogeneous Dirichlet boundary conditions can be treated in a standard way via homogenization.

\subsection{Weak formulation and weighted Sobolev spaces}
\label{sec:weak_formulation}
The weak formulation employs weighted Sobolev spaces $L^2_s(\Omega)$
and $H^1_s(\Omega)$, see \cite{Kuf80,BDM99,CDH23}, that are Hilbert spaces with the inner products
\begin{align*}
   (p, q)_{L^2_s} := \int_\Omega r^s p q \drdz,
   \quad \text{and} \quad
   (p, q)_{H^1_s} := \int_\Omega r^s \left( p q + \nabla p \cdot \nabla q \right) \drdz,
\end{align*}
and the corresponding weighted norms
$\| q \|_{L^2_s}^2 := (q,q)_{L^2_s}$ and $\| q \|_{H^1_s}^2 := (q,q)_{H^1_s}$,
respectively. We omit the index $s$ in the notation if $s = 0$.
{We use boldface notation to denote vector-valued spaces and estimates with $\lesssim$ and $\gtrsim$ hide generic constants.}

The weak solution $(\u, p) \in \V \times Q$ of the axisymmetric Stokes solves the mixed problem
\begin{subequations}\label{eq:AxiStokesCont} 
  \begin{alignat}{2}
  \nu a(\u,\v) + b(p,\v) & = (\bf, r \v)_{L^2} && \quad \text{for all } \v \in \V,\\
           b(q,\u) & = 0 && \quad \text{for all } q \in Q,
  \end{alignat}
\end{subequations}
where the bilinear forms read
\begin{align*}
a(\u,\v) := (\nabla_{(r,z)} \u, \nabla_{(r,z)} \v)_{L^2_1} + (u_r, v_r)_{L^2_{-1}}
\quad \text{and} \quad
b(q,\v) := {-(\mathrm{div}_{(r,z)}(r \v), q)_{L^2}}.
\end{align*}
The space $\V := \lbrace v \in \bm{H}^1_1(\Omega) : v_r \in L^2_{-1}(\Omega),
{\v = 0 \text{ along } \Gamma} \rbrace$ is equipped with the inner product $a(\cdot,\cdot)$ and corresponding energy norm
\begin{align*}
  \| \v \|^2_{\V} := a(\v,\v) = \| \nabla \v \|^2_{L^2_1} + \| v_r \|^2_{L^2_{-1}}.
\end{align*}
Observe, that boundedness in the energy norm requires $v_r = 0$ along the rotation axis $\GammaR$ if it is part of the domain boundary. 
On the remaining boundary $\Gamma := \partial \Omega \setminus \GammaR$,
we assume homogeneous Dirichlet boundary conditions for simplicity.
The pressure space $Q := \lbrace q \in L^2_1(\Omega) : \int_\Omega r q \mathit{dx} = 0 \rbrace$
fixes the integral mean to ensure uniqueness. Well-posedness is inherited from
the three-dimensional setting via the transformations in Remark~\ref{rem:transformations_to_3d} below.
More details on the functional analysis and well-posedness for axisymmetric problems
can be found in \cite{BDM99}.

In the axisymmetric setting, it is the flux quantity $r \u$ that is conserved and hence divergence-free. 
With abuse of notation we denote the corresponding velocity $\u$ as divergence-free in the sense that
\begin{align*}
  \u \in \V_0
  := \lbrace \v \in \V : b(q, \v) = 0 ~~\text{for all}~~ q \in Q \rbrace.
\end{align*}
We denote the $a(\cdot,\cdot)$-orthogonal complement to $\V_0$ in $\V$ by $\V_\perp$, i.e., $\V = \V_0 \oplus_a \V_\perp$.
Motivated by the continuity estimate 
\begin{align*}
  \lvert b(q,\v) \rvert  = \lvert - (\mathrm{div}_{(r,z)}(r \v), q)_{L^2} \rvert 
  \le \| \mathrm{div}_{(r,z)}(r \v) \|_{{L^2_{-1}}} \| q \|_{L^2_1}
  \quad \textrm{for all } \v \in \V, q \in Q,
\end{align*}
and 
\begin{align}
  \| \mathrm{div}_{(r,z)}(r \v) \|_{{L^2_{-1}}}^2
  &= \int_\Omega r^{-1} \left( \mathrm{div}_{(r,z)}(r \v) \right)^2 \drdz
  = \int_\Omega r^{-1} \left( r \mathrm{div}_{(r,z)}(\v) + v_r \right)^2 \drdz, 
  \nonumber\\
  &
   \le 2 \int_\Omega r \left( \mathrm{div}_{(r,z)}(\v) \right)^2 \drdz + 2 \int_\Omega r^{-1} v_r^2 \drdz
  \lesssim \| \v \|^2_{\V}, \label{eq::div_continuity_estimate}
\end{align}
we further see that 
\begin{align} \label{eq::ru_in_hdivminusone}
  r \v \in H_{-1}(\mathrm{div},\Omega)
  := \left\lbrace \v \in \bm{L}^2_{-1}(\Omega) : \mathrm{div}_{(r,z)}(\v) \in L^2_{-1}(\Omega) \right\rbrace.
\end{align}
Here and in the following we omit the subscript $(r,z)$ of the divergence in the notation of the $H_{-1}(\mathrm{div},\Omega)$-space (and similar ones) for brevity. 

Sections~\ref{sec:classical_reconstructions} and \ref{sec:modified_reconstruction}
discuss pressure-robust finite element
discretizations that are based on reconstructions of the weighted (discrete) test functions $r \v_h$. The most sophisticated approach, as it allows for an error analysis with improved consistency error, is motivated by \eqref{eq::ru_in_hdivminusone} but is weakened in the sense that it only maps into
\begin{align} \label{eq::Piru_in_hdiv}
  \bm{L}^2_{-1}(\Omega) \cap H(\mathrm{div}, \Omega) 
  = \left\lbrace \v \in L^2_{-1}(\Omega) : \mathrm{div}_{(r,z)}(\v) \in L^2(\Omega) \right\rbrace.
\end{align}
 Thus, we only consider standard $H(\mathrm{div})$-conformity without the weighting factor $r$ in the norm-bound of the divergence. 

\begin{remark}[Transformation to the three-dimensional setting]\label{rem:transformations_to_3d}
  The bilinear forms are equivalent to the bilinear forms of the three-dimensional Stokes problem in Cartesian coordinates on the body of revolution $\widehat{\Omega}$ (up to a factor $2\pi$) by the transformation rules
\begin{align*}
  a(\u,\v)
  = \int_{\Omega} \left( \nabla \u : \nabla \v + r^{-2} u_r v_r \right) r \drdz
= \frac{1}{2\pi}\int_{\transformed{\Omega}} \nabla_{\!\transformed{\bm{x}}} \transformed{\u} : \nabla_{\!\transformed{\bm {x}}} \transformed{\v} \dxtrans,
\end{align*}
and
\begin{align*}
  b(q,\v)
  &= (\mathrm{div}_{(r,z)}(r \v), q)_{L^2}
  = \! \int_{\Omega} \! q \left( \mathrm{div}_{(r,z)}(\v) + r^{-1} v_r \right) r \drdz
  = \! \frac{1}{2\pi}\int_{\transformed{\Omega}} \! \transformed{q} \mathrm{div}_{\transformed{\bm x}} (\transformed{\v}) \dxtrans.
\end{align*}
\end{remark}

\subsection{Helmholtz projection}\label{sec:helmholtz_projection}
The Helmholtz decomposition splits any axisymmetric forcing into a gradient (pressure) part and a divergence-free remainder. Concretely, for any $\bm f\in L^2_1(\Omega)$ there exist
\begin{align}\label{eqn:helmholtz_decomposition}
  \bm f = \nabla \alpha_{\bm f} + \mathbb{P}\bm f,
\end{align}
with $\alpha_{\bm f}\in H^1_1(\Omega)/\mathbb{R}$ determined by
\begin{align*}
  (\nabla \alpha_{\bm f},\nabla \beta)_{L^2_1} = (\bm f,\nabla\beta)_{L^2_1}\qquad\text{for all }\beta\in H^1_1(\Omega)/\mathbb{R}.
\end{align*}
The remainder $\mathbb{P}\bm f \in \bm{L}^2_1(\Omega)$ is $L^2_1$-orthogonal to all gradients, i.e., $(\mathbb{P}\bm f,\nabla q)_{L^2_1}=0$ for all $q\in H^1_1(\Omega)$.
Furthermore, $\nabla \alpha_{\bm f}$ is $L^2_1$-orthogonal to all divergence-free functions $\v_0\in\V_0$, since
$$
(\nabla \alpha_{\bm f},\v_0)_{L^2_1} = (\nabla \alpha_{\bm f}, r\v_0)_{L^2} = -(\alpha_{\bf}, \mathrm{div}_{(r,z)}(r \v_0))_{L^2} = 0.
$$
We extend the Helmholtz projector to functionals $\g\in\V^\star$ by restriction to the divergence-free subspace $\V_0$: for $\v\in\V$ with $\v = \v_0 + \v_\perp$, $\v_0 \in \V_0$, $\v_\perp \in \V_\perp$, {define $\mathbb{P}\g \in \V_0^\star$ by}
\begin{align*}
  \langle \mathbb{P}\g,\v\rangle_{\V_0^\star\times\V} := \langle\g,\v_0\rangle_{\V^\star\times\V}.
\end{align*}
If the functional $\g\in\V^\star$ is represented by a field $\bm f\in L^2_1(\Omega)$ then $\mathbb{P}\g \in \V_0^\star$ coincides with $\mathbb{P}\bm f \in L^2_1(\Omega)$ in the sense
\begin{align*}
  \langle\g,\v_0\rangle_{\V^\star\times\V} \! = (\bf,\v_0)_{L^2_1} =  (\nabla \alpha_{\bf},\v_0)_{L^2_1} + (\mathbb{P}\bm f,\v_0)_{L^2_1} = (\mathbb{P}\bm f,\v_0)_{L^2_1} \quad \text{for all }\v_0\in\V_0,
\end{align*}
by orthogonality of $\nabla\alpha_{\bm f}$ to $\V_0$.
An important application is $\g=-\Delta_\text{axi}\u\in\V^\star$, defined via
\begin{align}\label{eq::def_lap_axi_vstar}
  \langle -\Delta_\text{axi}\u,\v\rangle_{\V^\star\times\V} := a(\u,\v)\qquad\text{for all }\v\in\V.
\end{align}
Testing this identity {for the solution $\u$ from \eqref{eq:AxiStokesCont}} with $\v_0\in\V_0$ gives
\begin{align*}
  \langle\mathbb{P}(-\Delta_\text{axi}\u),\v_0\rangle = a(\u,\v_0)=\frac{1}{\nu}(\bm f,\v_0)_{L^2_1}=\frac{1}{\nu}(\mathbb{P}\bm f,\v_0)_{L^2_1}.
\end{align*}
Hence, even if $-\Delta_\text{axi}\u$ need not lie in $\bm{L}^2_1(\Omega)$, its Helmholtz projection admits the $\bm{L}^2_1$-representation $\nu^{-1}\mathbb{P}\bm f$.

\section{Pressure-robustness for the axisymmetric Stokes problem
by reconstruction operators}
\label{sec:pressure_robustness}

In the Cartesian setting, pressure-robustness of a non-divergence-free method can be achieved by applying a reconstruction operator to the right-hand side, see \cite{Linke2014,LMT2016,LM:2016,LLMS2017}. The main idea is to replace the discretely divergence-free test functions with exactly divergence-free ones that are conforming at least in an $H(\mathrm{div})$ sense. In this section, we study the application of a generic reconstruction operator to radially-weighted test functions in the axisymmetric setting.

\subsection{A modified finite element method}
\label{sec:modified_fem}
Consider a regular triangulation $\mathcal{T}$ of $\Omega$ into triangles
with vertices $\mathcal{N}$ and edges $\mathcal{E}$,
and let $P_k(\mathcal{T})$ be the space of piecewise polynomials of order $k$.
For each $T \in \mathcal{T}$, let $h_T := \mathrm{diam}(T)$ denote its diameter and let $h_\mathcal{T} \in P_0(\mathcal{T})$ denote the piecewise constant mesh size function defined by $h_\mathcal{T}|_T := h_T$,
and as usual $h := \max_{T \in \mathcal{T}} h_T$.
\label{sec:modified_scheme}
We assume at least the regularity $\bf \in L^2_1(\Omega)$, corresponding to
$\widehat{\bf} \in L^2(\widehat{\Omega})$ in the three-dimensional transformed problem.
The modified axisymmetric Stokes problem seeks $(\u_h, p_h) \in \V_h \times Q_h$, such that
\begin{subequations}\label{eq:AxiStokesDiscr}
  \begin{alignat}{2}
  \nu a(\u_h,\v_h) + b(p_h,\v_h) & = (\bf, \Pi(r \v_h))_{L^2} && \quad \textrm{for all } \v_h \in \V_h, \\
          b(q_h,\u_h) & = 0 && \quad \textrm{for all } q_h \in Q_h,
  \end{alignat}
\end{subequations}
where $\Pi : \V \rightarrow \mathbf{W}_h$ is a suitable reconstruction operator
into a discrete $H(\mathrm{div}, \Omega)$-conforming space $\mathbf{W}_h$.
Note that with the choice $\Pi = \mathbb{I}$ (identity operator), we recover the classical finite element method without modifications. 

The well-posedness of the problem hinges on the inf-sup stability of the pair $\V_h \times Q_h$ and the
continuity of the modified right-hand side. For the latter, it suffices that the reconstruction operator provides the continuity estimate 
\begin{align} \label{eqn:continuity_reconstruction}
 \| \Pi(r \v_h) \|_{L^2_{-1}} \lesssim \| \v_h \|_{\V} \quad \text{for all } \v_h \in \V_h,
\end{align}
which then gives for $\f \in L^2_1(\Omega)$ 
\begin{align} \label{eqn:continuity_modified_rhs}
  (\f, \Pi(r \v_h))_{L^2} & \leq \| \f \|_{L^2_1} \| \Pi(r \v_h) \|_{L^2_{-1}} \le \| \f \|_{L^2_1} \| \v_h \|_{\V}. 
\end{align}

In this work, we consider the lowest-order Bernardi--Raugel finite element method
with the velocity and pressure ansatz spaces
\begin{align*}
  \V_h := \bm{P}_1(\mathcal{T}) \oplus \lbrace \lambda_i \lambda_j \bm{n}_E : E = \mathrm{conv} \lbrace N_i, N_j\rbrace \in \mathcal{E}\rbrace, \qquad
  Q_h := P_0(\mathcal{T}) \cap L^2_{1,0}(\Omega).
\end{align*}
Here, $\lambda_j$ is the barycentric coordinate for the vertex $N_j$
and $\bm{n}_E$ is a fixed unit normal vector of the edge $E \in \mathcal{E}$.
The space of discretely divergence-free functions is then given by
\begin{align*}
  \V_{h,0} := \left\lbrace \v_h \in \V_h : b(q_h, \v_h) = 0 \text{ for all } q_h \in Q_h \right\rbrace.
\end{align*}
The inf-sup stability of this pair
can be shown, similar to the proof in
\cite[Section 2.2.1]{LEE2011} for the Taylor--Hood finite element method, by using
Scott--Zhang type quasi-interpolators to design a Fortin interpolator.
Details can be found in \cite{FMR26}.

Motivated by \cite{LM:2016}, a straightforward choice is the standard interpolation into the $\textrm{RT}_0(\mathcal{T})$
and $\textrm{BDM}_1(\mathcal{T})$ finite element spaces defined in Section~\ref{sec:classical_reconstructions}.
As these choices have some drawbacks, some enhancement is discussed in Section~\ref{sec:modified_reconstruction}.
Before that, this section concludes with some abstract a priori error estimates.

\subsection{A priori error estimates}
The following abstract a priori error estimate includes consistency errors caused by the application or non-application of a reconstruction operator $\Pi$ in \eqref{eq:AxiStokesDiscr}, which we estimate further in the following sections. 

Apart from the Helmholtz decomposition of the right-hand side $\bf$ from \eqref{eqn:helmholtz_decomposition}, the analysis utilizes the discrete Stokes projector $\Pi_S: \V \to \V_{h,0}$ and the continuous Stokes lifting $\mathcal{L}_S: \V_h \to \V_0$ defined via 
\begin{alignat*}{2}
   a(\Pi_S \v, \v_h)&= a(\v , \v_h)   &&\quad \text{for all } \v_h \in \V_{h,0}, \ \v \in \V, \\
  a(\mathcal{L}_S\v_h, \v) &=a(\v_h , \v)     &&\quad \text{for all } \v \in \V_0, \ \v_h \in \V_h.
\end{alignat*}
Observe that there holds
\begin{align}\label{eqn:stokes_proj_lift_commute}
    a(\Pi_S \v, \w_h) = a(\v, \w_h) = a(\v, \mathcal{L}_S \w_h) &\quad \text{for all } \v \in \V_0, \w_h \in \V_{h,0}.
\end{align}

Under some common regularity assumption, the lifting allows for the following error estimate.
\begin{lemma} \label{lem::stokes_lifting_error_estimate}
If the Stokes problem inherits $\bm{H}^{1+s}_1(\Omega) \times H^s_1(\Omega)$ elliptic regularity
for some $s \in (0, 1]$ (from the equivalent three-dimensional problem)
such that $\nu \|\u\|_{\bm{H}^{1+s}_1} + \|p \|_{H^s_1} \lesssim \| \bf \|_{L^2_1}$,
there holds
    \begin{align}\label{eqn:stokes_lifting_error_estimate}
        \| \w_h - \mathcal{L}_S \w_h \|_{L^2_1} \lesssim \| h_\mathcal{T}^s \div(r \w_h) \|_{L^2_{-1}} \lesssim \| h_\mathcal{T}^s \w_h \|_{\V}
        \quad \text{for all } \w_h \in \V_{h,0}.
    \end{align}
\end{lemma}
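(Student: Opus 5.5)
This is an Aubin--Nitsche-type duality argument. Set $\bm{e} := \w_h - \mathcal{L}_S \w_h$, which lies in $\V$. Consider the dual Stokes problem with right-hand side $\bm{e}$: find $(\z, \xi) \in \V \times Q$ such that
\begin{align*}
  a(\z,\v) + b(\xi,\v) = (\bm{e}, r\v)_{L^2} \quad \text{for all } \v \in \V, \qquad b(q,\z) = 0 \quad \text{for all } q \in Q.
\end{align*}
Take $\nu = 1$ in the assumed regularity. This gives $\z \in \V_0$ with $\|\z\|_{\bm{H}^{1+s}_1} + \|\xi\|_{H^s_1} \lesssim \|\bm{e}\|_{L^2_1}$. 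Testing with $\v = \bm{e}$ yields
\begin{align*}
  \|\bm{e}\|_{L^2_1}^2 = a(\z, \w_h - \mathcal{L}_S\w_h) + b(\xi, \w_h - \mathcal{L}_S \w_h).
\end{align*}
The first term vanishes. Indeed, $\z \in \V_0$, so the definition of the lifting gives $a(\mathcal{L}_S\w_h,\z) = a(\w_h,\z)$. In the second term, $b(\xi,\mathcal{L}_S\w_h) = 0$ because $\mathcal{L}_S\w_h \in \V_0$. Hence
\begin{align*}
  \|\bm{e}\|_{L^2_1}^2 = b(\xi,\w_h) = -(\div(r\w_h), \xi)_{L^2}.
\end{align*}

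The next step uses that $\w_h \in \V_{h,0}$, so $b(q_h,\w_h)=0$ for all $q_h \in Q_h$. The constant mode is harmless: $\int_\Omega \div(r\w_h) = 0$ because $r\w_h$ vanishes on $\Gamma$ and $r=0$ on $\GammaR$. Together these show that $\div(r\w_h)$ is $L^2$-orthogonal to all of $P_0(\mathcal{T})$. We may therefore subtract any piecewise constant $q_h$ from $\xi$. The natural choice is the $r$-weighted elementwise mean $q_h|_T := \int_T r\xi / \int_T r$. Cauchy--Schwarz with weights $r^{-1}$ and $r$ gives
\begin{align*}
  \|\bm{e}\|_{L^2_1}^2 \le \|h_\mathcal{T}^s \div(r\w_h)\|_{L^2_{-1}} \, \|h_\mathcal{T}^{-s}(\xi - q_h)\|_{L^2_1}.
\end{align*}

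The main obstacle is the weighted approximation estimate $\|h_\mathcal{T}^{-s}(\xi - q_h)\|_{L^2_1} \lesssim \|\xi\|_{H^s_1}$. I would establish it elementwise as a weighted Poincaré inequality, $\|\xi - q_h\|_{L^2_1(T)} \lesssim h_T \|\nabla \xi\|_{L^2_1(T)}$ for $s=1$. The delicate case is elements touching the axis, where $r$ is degenerate. Since $r$ is an $A_2$-type weight (equivalently, one can pull back to the 3D element of revolution, where this is a standard Poincaré inequality on the revolved annular or solid patch), the constant depends only on shape regularity. The case $s<1$ then follows by interpolation between the trivial $s=0$ bound and the $s=1$ bound in the weighted scale, as in \cite{BDM99,Kuf80}. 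Dividing by $\|\bm{e}\|_{L^2_1}$ gives the first inequality.

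The second inequality follows from the continuity estimate \eqref{eq::div_continuity_estimate}, applied elementwise with the piecewise constant factor $h_\mathcal{T}^s$ inside the integrals.
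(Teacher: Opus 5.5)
Your proposal is correct and is essentially the paper's proof: the same duality argument with the auxiliary Stokes problem tested with $\w_h - \mathcal{L}_S\w_h$, the same insertion of the $L^2_1$-best piecewise-constant approximation of the dual pressure, and the same appeal to \eqref{eq::div_continuity_estimate} for the second inequality. Your $r$-weighted elementwise mean already lies in $Q_h$ because $\xi \in Q$, so the constant-mode argument is harmless but unnecessary; and $r$ is not an $A_2$ weight, since $r^{-1}$ is not integrable up to the axis, so justify the local weighted Poincaré step only by the 3D pull-back, as in Lemma~\ref{lem:weighted_poincare}.
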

\begin{proof}
  The proof follows the proof from \cite[Lemma 5.2]{LMN20}. Let $(\bpsi, \lambda)$ denote the
  solution of the auxiliary axisymmetric Stokes problem
  \begin{alignat*}{2}
    a(\bpsi, \z) + b(\lambda, \z) & = (\w_h - \mathcal{L}_S \w_h, \z)_{L^2_1} &&\quad \text{for all } \z \in \V,\\
    b(q, \bpsi) & = 0 &&\quad \text{for all } q \in Q.
  \end{alignat*}
  Testing with $\z = \w_h - \mathcal{L}_S \w_h$, using the definition of
  $\mathcal{L}_S$ and inserting $\pi_{Q_h} \lambda$, i.e., the $L^2_1$-best approximation of $\lambda$ in $Q_h$,
  yields
  \begin{align*}
    \| \w_h - \mathcal{L}_S \w_h \|_{L^2_1}^2
    & = a(\bpsi, \w_h - \mathcal{L}_S \w_h) 
    + b(\lambda, \w_h) 
    - b(\lambda, \mathcal{L}_S \w_h) = 0 + b(\lambda, \w_h) + 0\\
    & = b(\lambda - \pi_{Q_h} \lambda,  \w_h) \qquad {( \text{as } b(q_h, \w_h)=0 \text{ for all } q_h \in Q_h)}\\
    & \lesssim \| h_\mathcal{T}^{-s} (\lambda - \pi_{Q_h} \lambda) \|_{L^2_1} \| h_\mathcal{T}^s \div(r \w_h) \|_{L^2_{-1}}
    \lesssim {\| \lambda \|_{H^1_s}}  \| h_\mathcal{T}^s \div (r \w_h) \|_{L^2_{-1}}.
  \end{align*}
  {Using elliptic regularity
  $\| \lambda \|_{H^1_s} \lesssim \| \w_h - \mathcal{L}_S \w_h \|_{L^2_1} $
  and following} the estimates in \eqref{eq::div_continuity_estimate} concludes the proof.
\end{proof}

We can show that the discretization error is bounded by the best-approximation error in $\V_{h,0}$, the $L^2_1$-norm of the Helmholtz projector of $\Delta_\text{axi} \u$, and a \emph{consistency error} defined as follows: 
\begin{align}\label{eq:ConsistencyError}
  \Cerr \coloneqq \| \mathbb{P} (\Delta_\text{axi} \u) \circ (\mathbb{I} - \Pi) \|_{\V_{h,0}^\star} + \nu^{-1} \| \nabla \alpha_{\bf} \circ \Pi \|_{\V_{h,0}^\star}. 
\end{align}
Here, the dual norms are defined as 
\begin{align}\label{eqn:consistency_error}
  \| \mathbb{P} (\Delta_\text{axi} \u) \circ (\mathbb{I} - \Pi) \|_{\V_{h,0}^\star}
  & := \sup_{\w_h \in \V_{h,0}} \frac{(\mathbb{P}(\Delta_\text{axi} \u), r \w_h - \Pi (r \w_h))_{L^2}}{\| \w_h \|_{\V}},\\
  \| \nabla \alpha_{\bf} \circ \Pi \|_{\V_{h,0}^\star}
  & := \sup_{\w_h \in \V_{h,0}} \frac{(\alpha_{\bf}, \mathrm{div}(\Pi (r \w_h)))_{L^2}}{\| \w_h \|_{\V}},
\end{align}
where $\mathbb{P} (\Delta_\text{axi} \u)$ is identified with its representation $-\nu^{-1} \mathbb{P} \bf \in L^2_1(\Omega)$ as discussed at the end of Section~\ref{sec:helmholtz_projection}, and $\nabla \alpha_{\bf}$
is the remainder in the Helmholtz decomposition \eqref{eqn:helmholtz_decomposition} of $\bf$. 

\begin{theorem}[A priori error estimate]\label{thm:apriori_error_estimate}
    In addition to the assumptions of Lemma~\ref{lem::stokes_lifting_error_estimate}, we assume that $\bf \in L^2_1(\Omega)$ and that $\Pi$ allows for the continuity estimate \eqref{eqn:continuity_reconstruction}.
    Then, the error between the exact velocity $\u \in \V$ solving \eqref{eq:AxiStokesCont} and the discrete velocity $\u_h \in \V_h$ solving \eqref{eq:AxiStokesDiscr} is bounded by
    \begin{align*}
        \| \u - \u_h \|_{\V} \lesssim \inf_{\v_h \in \V_{h,0}} \| \u - \v_h \|_{\V} + \|h_\mathcal{T}^s  \mathbb{P} (\Delta_\text{axi} \u) \|_{L^2_1} + \Cerr.
    \end{align*}
\end{theorem}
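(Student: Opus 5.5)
We follow the standard pressure-robust argument from the Cartesian setting. Since $\u_h \in \V_{h,0}$ (by the second discrete equation), we take $\v_h := \Pi_S \u \in \V_{h,0}$ and split $\u - \u_h = (\u - \Pi_S\u) + (\Pi_S \u - \u_h)$. The first part is controlled by the best approximation in $\V_{h,0}$, because $\Pi_S$ is the $a$-orthogonal projection onto $\V_{h,0}$. It therefore remains to bound $\e_h := \Pi_S\u - \u_h \in \V_{h,0}$. Its norm equals $\sup_{\w_h\in\V_{h,0}} a(\e_h,\w_h)/\|\w_h\|_{\V}$, where
\begin{align*}
  \nu a(\e_h,\w_h) = \nu a(\u,\w_h) - (\bf,\Pi(r\w_h))_{L^2},
\end{align*}
since $b(p_h,\w_h)=0$ for $\w_h\in\V_{h,0}$.

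Next we insert the Helmholtz decomposition $\bf = \nabla\alpha_{\bf} + \mathbb{P}\bf$. The gradient part gives $(\nabla\alpha_{\bf},\Pi(r\w_h))_{L^2} = -(\alpha_{\bf},\div\Pi(r\w_h))_{L^2}$. This uses the $H(\mathrm{div})$-conformity of $\Pi(r\w_h)$ and a vanishing normal trace on $\Gamma$, which we assume is built into $\mathbf{W}_h$; on $\GammaR$ the weight makes the boundary term harmless. After division by $\nu$, this contributes exactly the second part of $\Cerr$. For the term $\nu a(\u,\w_h)$ we use the Stokes lifting. Write $\w_h = \mathcal{L}_S\w_h + (\w_h - \mathcal{L}_S\w_h)$. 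Since $\u\in\V_0$, \eqref{eqn:stokes_proj_lift_commute} gives $a(\u,\w_h) = a(\u,\mathcal{L}_S\w_h)$. Because $\mathcal{L}_S\w_h\in\V_0$, the characterization at the end of Section~\ref{sec:helmholtz_projection} yields $\nu a(\u,\mathcal{L}_S\w_h) = (\mathbb{P}\bf, \mathcal{L}_S\w_h)_{L^2_1}$. Hence
\begin{align*}
  \nu a(\e_h,\w_h) &= (\mathbb{P}\bf, r\mathcal{L}_S\w_h - r\w_h)_{L^2} + (\mathbb{P}\bf, r\w_h - \Pi(r\w_h))_{L^2} + (\alpha_{\bf},\div\Pi(r\w_h))_{L^2}.
\end{align*}

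Dividing by $\nu$ and using the identification $\mathbb{P}(\Delta_\text{axi}\u) = -\nu^{-1}\mathbb{P}\bf$, the middle term becomes the first part of $\Cerr$ (up to sign, which the supremum absorbs). The first term is bounded by Cauchy--Schwarz in $L^2_1$ together with Lemma~\ref{lem::stokes_lifting_error_estimate}. To obtain the local weight we place $h_\mathcal{T}^s$ on $\mathbb{P}\bf$: inside the proof of that lemma, the local bound $\|h_\mathcal{T}^{-s}(\lambda-\pi_{Q_h}\lambda)\|$ combined with $\|h_\mathcal{T}^s\div(r\w_h)\|_{L^2_{-1}}$ shows that the duality can be weighted elementwise. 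Alternatively, with quasi-uniform $h$, we bound directly by $h^s\|\mathbb{P}\bf\|_{L^2_1}\|\w_h\|_{\V}$. The continuity estimate \eqref{eqn:continuity_reconstruction} guarantees that all terms are finite and that the discrete problem is well posed.

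The main obstacle is this first term, namely obtaining $\|h_\mathcal{T}^s\mathbb{P}(\Delta_\text{axi}\u)\|_{L^2_1}$ with the mesh-size function inside the norm rather than a global $h^s$. If the localized duality argument does not go through cleanly, we would state the bound with $h^s$; it coincides with the claimed one for quasi-uniform meshes. A secondary point is justifying the integration by parts for $\nabla\alpha_{\bf}$ against $\Pi(r\w_h)$. This requires $\alpha_{\bf}\in H^1_1$ with $\Pi(r\w_h)\in L^2_{-1}$, so that the pairing $(\nabla\alpha_{\bf},\Pi(r\w_h))_{L^2}$ is well defined by the weighted Cauchy--Schwarz inequality, and normal traces of $\Pi(r\w_h)$ vanishing on $\Gamma$.
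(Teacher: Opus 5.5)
Your proposal is correct and follows essentially the same route as the paper. You split off the best-approximation error via the discrete Stokes projector $\Pi_S$, insert the Helmholtz decomposition of $\bf$ into the discrete equation, and use the Stokes lifting $\mathcal{L}_S$ to produce exactly the three terms bounded by the two parts of $\Cerr$ and by Lemma~\ref{lem::stokes_lifting_error_estimate}.

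The paper treats the lifting term just as you do, with Cauchy--Schwarz plus Lemma~\ref{lem::stokes_lifting_error_estimate} and no further localization. Your caveat therefore applies equally to the paper's own argument: this naturally yields a global factor $h^s$ rather than $\|h_{\mathcal{T}}^s \mathbb{P}(\Delta_{\text{axi}}\u)\|_{L^2_1}$, and the two agree on quasi-uniform meshes.
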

\begin{proof}
  We first note that $\u \in \V_0$ and $\u_h \in \V_{h,0}$ and by definition of the discrete Stokes projector  
  we have $\w_h := \Pi_S \u - \u_h \in \V_{h,0}$ and
   $a( \Pi_S \u - \u, \Pi_S \u - \u_h) = 0$, i.e., $\Pi_S \u - \u \perp_a \Pi_S \u - \u_h$.
  Hence, the error can be split into a best-approximation error and a consistency error via the Pythagorean identity
  \begin{align*}
  \| \u - \u_h \|^2_{\V}
  & = \| \u - \Pi_S \u \|^2_{\V} + \| \Pi_S \u - \u_h \|^2_{\V},
  \end{align*}
  where $\| \u - \Pi_S \u \|^2_{\V} = \inf_{\v_h \in \V_{h,0}} \| \u - \v_h \|^2_{\V}$. It hence remains to estimate the consistency error term $\| \Pi_S \u - \u_h \|^2_{\V}$.
    As $b(q_h, \w_h) = 0$ for all $q_h \in Q_h$, we further obtain 
    \begin{align*} 
    a(\u_h, \w_h)
      & = \nu^{-1} (\bf, \Pi(r \w_h))_{L^2} 
      = \nu^{-1} (\mathbb{P} \bf + \nabla \alpha_{\bf}, \Pi(r \w_h))_{L^2} \\
      & = (\mathbb{P} (-\Delta_\text{axi} \u), \Pi(r \w_h))_{L^2}
      - \nu^{-1} (\alpha_{\bf}, \mathrm{div}(\Pi(r \w_h)))_{L^2}.
    \end{align*}
    Due to \eqref{eqn:continuity_reconstruction} and estimates similar to \eqref{eqn:continuity_modified_rhs}, all terms are well-defined. Adding and subtracting $(\mathbb{P} (-\Delta_\text{axi} \u), r \w_h)_{L^2}$ yields
    \begin{multline}
  a(\u_h, \w_h)
  = (\mathbb{P} (-\Delta_\text{axi} \u), \Pi(r \w_h) - r \w_h)_{L^2} + (\mathbb{P}(- \Delta_\text{axi} \u), \w_h)_{L^2_1}\\
  - \nu^{-1} (\alpha_{\bf}, \mathrm{div}(\Pi(r \w_h)))_{L^2}.\label{eq::cons_err_one}
    \end{multline}
    Next note, that we have 
    \begin{align} \label{eq::cons_err_two}
    (\mathbb{P} (- \Delta_\text{axi} \u), \w_h)_{L^2_1}
    &= (\mathbb{P} (- \Delta_\text{axi} \u), \w_h - \mathcal{L}_S \w_h)_{L^2_1} + (\mathbb{P} (- \Delta_\text{axi} \u), \mathcal{L}_S \w_h)_{L^2_1} \nonumber\\
    &= (\mathbb{P} (- \Delta_\text{axi} \u), \w_h - \mathcal{L}_S \w_h)_{L^2_1} + a(\Pi_S \u, \w_h),
    \end{align}
  where the last step follows from
  \begin{align*}
    (\mathbb{P}(- \Delta_\text{axi} \u), \mathcal{L}_S \w_h)_{L^2_1}
    \stackrel{\eqref{eq::def_lap_axi_vstar}}{=} a(\u,  \mathcal{L}_S \w_h) \stackrel{\eqref{eqn:stokes_proj_lift_commute}}{=} a(\Pi_S \u, \w_h).
  \end{align*}
  This shows that 
  \begin{align*}
  \| \w_h \|_{\V}^2 &= a(\w_h, \w_h) = a(\u_h, \w_h) - a(\Pi_S \u, \w_h) \\
  & \stackrel{\hspace*{-2em}\eqref{eq::cons_err_one}\hspace*{-2em}}{=} ~
  (\mathbb{P} (- \Delta_\text{axi} \u), \Pi(r \w_h) - r \w_h)_{L^2} + (\mathbb{P} (- \Delta_\text{axi} \u), \w_h)_{L^2_1} - a(\Pi_S \u, \w_h)\\
  & \hspace{7cm} - \nu^{-1} (\alpha_{\bf}, \mathrm{div}(\Pi(r \w_h)))_{L^2}\\
  & \stackrel{\hspace*{-2em}\eqref{eq::cons_err_two}\hspace*{-2em}}{=}~
  (\mathbb{P} (- \Delta_\text{axi} \u), \Pi(r \w_h) - r \w_h)_{L^2} + (\mathbb{P} (- \Delta_\text{axi} \u), \w_h - \mathcal{L}_S \w_h)_{L^2_1}\\
  & \hspace{7cm} - \nu^{-1} (\alpha_{\bf}, \mathrm{div}(\Pi(r \w_h)))_{L^2}\\
  & 
  \stackrel{\hspace*{-2em}\eqref{eqn:stokes_lifting_error_estimate}\hspace*{-2em}}{\le}~
   \left( \| \mathbb{P} (\Delta_\text{axi} \u) \! \circ \! (\mathbb{I} - \Pi) \|_{\V_{h,0}^\star} \! \! + \|h_\mathcal{T}^s \mathbb{P} (\Delta_\text{axi} \u) \|_{L^2_1} + \nu^{-1} \| \nabla \alpha_{\bf} \circ \Pi \|_{\V_{h,0}^\star} \! \right)\! \| \w_h\|_{V},
  \end{align*}
  which concludes the proof.
\end{proof}

\begin{remark}[Lack of pressure-robustness for $\Pi = \mathbb{I}$]
  While the consistency error $\| \mathbb{P} (\Delta_\text{axi} \u) \circ (\mathbb{I} - \Pi) \|_{\V^\star_{h,0}}$ in \eqref{eq:ConsistencyError} vanishes for $\Pi = \mathbb{I}$, the remaining term in $\Cerr$ is only bounded by the best-approximation error in the pressure space
  \begin{align}\label{eq:NablaAlpha}
    \| \nabla \alpha_{\bf} \circ \Pi \|_{\V_{h,0}^\star} = \| \nabla \alpha_{\bf}  \|_{\V_{h,0}^\star} = \inf_{q_h \in Q_h} \| \nabla (\alpha_{\bf} - q_h) \|_{\V_{h,0}^\star} & \lesssim \inf_{q_h \in Q_h} \| \alpha_{\bf} - q_h \|_{L^2_1}.
  \end{align}
  Since it appears with the factor $\nu^{-1}$ in \eqref{eq:ConsistencyError}, it causes a locking effect in the limit $\nu \rightarrow 0$ if the pressure cannot be approximated well enough.
  This characterizes the typical non-pressure-robust behavior of classical finite element methods.
\end{remark}

\begin{remark}[Condition for pressure-robustness]
  To avoid the locking effect outlined in the previous remark, the term \eqref{eq:NablaAlpha} needs to vanish. To ensure this, a potential reconstruction operator needs to guarantee the pointwise divergence-free condition
  \begin{align}\label{eqn:div_free_condition_reconstruction}
      \div(\Pi(r \v_h)) = 0 \quad \text{for all } \v_h \in \V_{h,0}.
  \end{align}
\end{remark}

\section{Classical $H(\mathrm{div}$)-conforming reconstruction operators} \label{sec:classical_reconstructions}
If the pressure space $Q_h$ consists of piecewise discontinuous polynomials
of order $0$, the standard interpolation operators $\Pi_\mathrm{BDM1}$  or $\Pi_\mathrm{RT0}$ into the Brezzi--Douglas--Marini (BDM) finite element space
$$\mathrm{BDM}_1(\mathcal{T}) := \bm P_1(\mathcal{T}) \cap H(\mathrm{div}, \Omega),$$
or the space of
Raviart--Thomas (RT) finite element functions
\begin{align*}
   \mathrm{RT}_0(\mathcal{T})
   := \Big\lbrace 
   \v_h(r,z)|_T = \begin{pmatrix}a\\b\end{pmatrix} + c\begin{pmatrix}r\\z\end{pmatrix} : a,b,c \in \mathbb{R} \text{ for all } T \in \mathcal{T} \Big\rbrace \cap H(\mathrm{div}, \Omega),
\end{align*}
are potential candidates for the reconstruction operator $\Pi$. Recall that $H(\mathrm{div},\Omega)$-conformity for polynomials requires continuous normal traces over all faces $\mathcal{F}$
of the triangulation.
For these standard interpolations, the following commutative property holds: 
\begin{align}\label{eqn:reconst_commute_prop}
  \mathrm{div}_{(r,z)} \Pi(r \v) = \pi_0 \mathrm{div}_{(r,z)}(r \v) \in Q_h \quad \text{for all } \v \in L^2(\Omega) \text{ and } r\v \in H(\mathrm{div},\Omega),
\end{align}
where $\pi_0$ is the $L^2$-orthogonal projection onto $P_0(\mathcal{T})$.
This directly ensures \eqref{eqn:div_free_condition_reconstruction}.
Moreover, first-order approximation properties hold in the sense
\begin{align}\label{eqn:reconst_approx_prop}
  \| h_\mathcal{T}^{-1} (1-\Pi) \v \|_{L^2} \lesssim \|\nabla \v\|_{L^2} \quad \text{for all } \v \in \bm{H}^1(\Omega).
\end{align}
Applied to a weighted test function {$\v \in \V$}, this yields
\begin{align}\label{eqn:reconst_approx_prop_weighted}
  \| h_\mathcal{T}^{-1} (1-\Pi) (r\v) \|_{L^2} & \leq \| \nabla (r\v)\|_{L^2}
                            \leq \|v_r\|_{L^2} + \|v_z\|_{L^2} + \| \nabla \v \|_{L^2_1}
                            \lesssim \| \v \|_{\V}
\end{align}
where we used the argument from Remark~\ref{rem:embedding_mr82} below and the chain rule
\begin{align*}
  \nabla (r \v) = r \nabla \v + \begin{pmatrix}
  v_r & 0 \\
  v_z & 0 \\
  \end{pmatrix}.
\end{align*}
\begin{remark}\label{rem:embedding_mr82}
   According to \cite[Remarque 4.1]{MR82}, $H^1_1(\Omega)$ is
   continuously embedded in $L^2_{-1 + \epsilon}(\Omega)$ for any $\epsilon > 0$, that is, there exists a constant $C_\epsilon$ such that
   \begin{align}
    \Vert v \Vert_{L^2_{-1 + \epsilon}(\Omega)} \leq C_\epsilon \Vert v \Vert_{H^1_1(\Omega)} \quad \text{for all } v \in H^1_1(\Omega).
   \end{align}
   For $\epsilon = 1$, this shows
   $v_z \in L^2(\Omega)$ for all $\v \in \V$ and hence $r \bm{v} \in H^1(\Omega)$. In particular, the last inequality in \eqref{eqn:reconst_approx_prop_weighted} follows from an application of the Friedrich's inequality.
\end{remark}

\begin{remark}[Regularity issue]\label{rem:regularity_issue}
  It is not guaranteed that the continuity estimates \eqref{eqn:continuity_reconstruction} and \eqref{eqn:continuity_modified_rhs} hold for the standard reconstructions $\Pi_\text{RT0}$ and $\Pi_\text{BDM1}$.
  However, if the right-hand side has the (higher) regularity $\bf \in L^2(\Omega)$, then we still get
  \begin{align*}
    (\bf, \Pi(r \v_h))_{L^2} & \leq \| \bf \|_{L^2} \| \Pi(r \v_h) \|_{L^2}
    \lesssim \| \bf \|_{L^2} \| r\v_h\|_{H^1}
    \lesssim \| \bf \|_{L^2} \| \v_h\|_{\V},
  \end{align*}
  {using Cauchy-Schwarz, a Friedrich's inequality and \eqref{eqn:reconst_approx_prop_weighted}}. Unfortunately, if only $\bf \in L^2_1(\Omega) \setminus L^2(\Omega)$, well-posedness is not guaranteed in general. 
  Indeed, a counter example for \eqref{eqn:continuity_reconstruction} relies on the fact that the reconstruction $\Pi(r \v_h)$ does not need to vanish at the rotation axis $\GammaR$. For the function $\w_h := (r, - 2z) \in \V_0$ the $\textrm{RT}_0(\mathcal{T})$ reconstruction of $r \w_h$ must be divergence-free and constant and hence $\Pi_\text{RT0}(r \w_h) \in P_0(\mathcal{T})$. At the rotation axis that constant will not be zero in general, which yields
  $\| \Pi_\text{RT0}(r \w_h) \|_{L^2_{-1}} = \infty$. If $\GammaR = \emptyset$, the $\Vert \cdot \Vert_{L^2_{-1}(\Omega)}$ and the $\Vert \cdot \Vert_{L^2(\Omega)}$ norm are equivalent and thus, assuming higher regularity is  mainly problematic in the cases where $\GammaR \neq \emptyset$.
\end{remark}

Besides the regularity issue above, the consistency error in Theorem~\ref{thm:apriori_error_estimate} can be estimated for the standard reconstruction operators under slightly stronger regularity assumptions on the data.

\begin{theorem}\label{thm:recons_consistency_error}
If there holds $\mathbb{P} (\Delta_\text{axi} \u) \in L^2(\Omega)$ (which is guaranteed by $\bf \in L^2(\Omega))$,
the consistency error \eqref{eqn:consistency_error}
for $\Pi \in \lbrace \Pi_\mathrm{RT0}, \Pi_\mathrm{BDM1} \rbrace$ can be estimated by
\begin{align*}
    \| \mathbb{P} (\Delta_\text{axi} \u) \circ (\mathbb{I} - \Pi) \|_{\V_{h,0}^\star}
    \lesssim \| h_\mathcal{T} \mathbb{P} (\Delta_\text{axi} \u) \|_{L^2} 
    \end{align*}
\end{theorem}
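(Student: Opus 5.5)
The estimate follows directly from the definition of the dual norm in \eqref{eqn:consistency_error}, combined with Cauchy--Schwarz in the unweighted $L^2$ inner product and the weighted approximation property \eqref{eqn:reconst_approx_prop_weighted}. The test functions $\w_h \in \V_{h,0}$ enter only through the single difference $r\w_h - \Pi(r\w_h)$, so no divergence-free structure is needed in this part of the argument. Divergence-freeness only matters for the second term of $\Cerr$, and that term vanishes by \eqref{eqn:reconst_commute_prop}.

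Fix $\w_h \in \V_{h,0}$. First, $r\w_h \in \bm{H}^1(\Omega)$ by Remark~\ref{rem:embedding_mr82}, so $\Pi(r\w_h)$ is well defined for $\Pi \in \lbrace \Pi_\mathrm{RT0}, \Pi_\mathrm{BDM1}\rbrace$. Since by assumption $\mathbb{P}(\Delta_\text{axi}\u) \in L^2(\Omega)$, the pairing in the numerator is finite. Inserting the mesh-size weight elementwise, Cauchy--Schwarz gives
\begin{align*}
  (\mathbb{P}(\Delta_\text{axi} \u), r\w_h - \Pi(r\w_h))_{L^2}
  \le \| h_\mathcal{T} \mathbb{P}(\Delta_\text{axi}\u)\|_{L^2}\, \| h_\mathcal{T}^{-1}(1-\Pi)(r\w_h)\|_{L^2}.
\end{align*}
The second factor is bounded by $\|\w_h\|_{\V}$ through \eqref{eqn:reconst_approx_prop_weighted}. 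That bound uses three ingredients: the local first-order approximation property \eqref{eqn:reconst_approx_prop} of the standard RT$_0$/BDM$_1$ interpolations, the product rule for $\nabla(r\w_h)$, and the embedding of Remark~\ref{rem:embedding_mr82} with $\epsilon=1$ together with a Friedrichs inequality to control $\|w_r\|_{L^2}+\|w_z\|_{L^2}$. Dividing by $\|\w_h\|_{\V}$ and taking the supremum over $\V_{h,0}$ then yields the claim.

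The only step that needs care is justifying \eqref{eqn:reconst_approx_prop} for $r\w_h$. This requires elementwise $H^1$-regularity of $r\w_h$ so that the normal-moment degrees of freedom make sense. The local estimate $\|(1-\Pi)\v\|_{L^2(T)} \lesssim h_T \|\nabla \v\|_{L^2(T)}$ then follows from the Bramble--Hilbert lemma, because $\Pi$ preserves constants, plus shape regularity. Here $\w_h$ is piecewise polynomial and globally continuous, so $r\w_h$ is piecewise polynomial and globally $H^1$, and this step is unproblematic. I also note that the weighted $L^2_{-1}$ continuity issue of Remark~\ref{rem:regularity_issue} is bypassed precisely because we use the unweighted $L^2$ pairing, which the extra regularity $\mathbb{P}(\Delta_\text{axi}\u)\in L^2(\Omega)$ makes possible.
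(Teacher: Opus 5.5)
Your proposal is correct and follows the paper's proof: Cauchy--Schwarz in the unweighted $L^2$ pairing with the mesh-size weight $h_\mathcal{T}$ inserted, followed by the weighted interpolation estimate \eqref{eqn:reconst_approx_prop_weighted}. Your extra remarks on why $\Pi(r\w_h)$ is well defined ($r\w_h\in\bm{H}^1(\Omega)$) and on the Bramble--Hilbert justification of the local estimate fill in details that the paper takes for granted.
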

\begin{proof}
    This follows by applying the Cauchy-Schwarz inequality and \eqref{eqn:reconst_approx_prop_weighted}, i.e.,
    \begin{align*}
(\mathbb{P} (\Delta_\text{axi} \u), r \w_h - \Pi (r \w_h))_{L^2}
  & \leq \| h_\mathcal{T} \mathbb{P} (\Delta_\text{axi} \u) \|_{L^2} \| h_\mathcal{T}^{-1} (1-\Pi) (r \w_h) \|_{L^2}\\
  & \lesssim \| h_\mathcal{T} \mathbb{P} (\Delta_\text{axi} \u) \|_{L^2} \|\w_h \|_{\V}. 
    \end{align*}
\end{proof}

\begin{remark}
To solve the regularity issue, we could try to look for a reconstruction operator acting on $\v$ instead of $r\v$. A possible choice is an operator from \cite{copeland_mixed_2008} for an
axisymmetric Maxwell problem, which (adapted to our setting) reads as
\begin{align*}
   \widetilde{\Pi}_\mathrm{RT0}(\v) := \sum_{E \in \mathcal{E}} \psi_E^{\mathrm{RT}0} \frac{\int_E \v \cdot \bm{n} r \mathit{ds}}{\int_E r \mathit{ds}},
\end{align*}
where $\psi_E^{\mathrm{RT}0}$ are the $\mathrm{RT}_0$ basis functions for the edge $E \in \mathcal{E}$.
While $r \widetilde{\Pi}_\mathrm{RT0}(\v) \neq \Pi_\mathrm{RT0}(r \v)$,
the operators satisfy
\begin{align*}
  \int_T \mathrm{div}_{(r,z)}(\widetilde{\Pi}_\mathrm{RT0}(\v) r) \mathit{dx}
  &= \int_{\partial T} \widetilde{\Pi}_\mathrm{RT0}(\v) \cdot \bm{n} r \mathit{ds}
   = \int_{\partial T} \v \cdot \bm{n} r \mathit{ds}\\
  & = \int_{T} \mathrm{div}_{(r,z)}(\Pi_\mathrm{RT0}(r \v)) \mathit{dx}
  = \int_{T} \mathrm{div}_{(r,z)}(r \v) \mathit{dx},
\end{align*}
or, in other words,
\begin{align*}
   \mathrm{div}_{(r,z)}(\Pi_\mathrm{RT0}(r \v))
   = \pi_0(\mathrm{div}_{(r,z)}(r \v))
   = \pi_0(\mathrm{div}_{(r,z)}(\widetilde{\Pi}_\mathrm{RT0}(\v) r)).
\end{align*}
Unfortunately, it holds $\mathrm{div}_{(r,z)}(\widetilde{\Pi}_\mathrm{RT0}(\v) r) \notin Q_h$
in general, so using $\widetilde{\Pi}_\mathrm{RT0}(\v) r$ in the right-hand side
would probably solve the regularity issue (see \cite[Lemma 5.3]{copeland_mixed_2008}),
but will not result in a pressure-robust discretization requiring $\mathrm{div}_{(r,z)}(\Pi(\v r)) \in Q_h$.
\end{remark}

\section{A modified reconstruction operator}
\label{sec:modified_reconstruction}
This section presents a design for a modified reconstruction operator
that ensures pointwise homogeneous boundary conditions along $\GammaR$, which we assume to be non-empty for this section.
This condition is the key to get a reconstruction operator that maps $r \u \in H_{-1}(\mathrm{div},\Omega)$
into an $L^2_{-1}(\Omega)$-conforming discrete subspace of $H(\mathrm{div},\Omega)$, see \eqref{eq::Piru_in_hdiv}. The slightly more involved
construction allows lowering the regularity assumptions on $\mathbb{P}(\Delta_\text{axi} \u)$
in the consistency error estimate. In comparison to \eqref{eqn:reconst_approx_prop_weighted},
the improved error estimate
\begin{align}\label{eqn:interpolation_error_modified}
  \| h_\mathcal{T}^{-1} (1-\Pi) (r\v) \|_{L^2_{-1}} & \lesssim \| \v \|_{\V},
\end{align}
is shown in Theorem~\ref{thm:modified_interpolation_estimate} below.
The novel design requires the modification of the
ansatz functions on triangles intersecting the rotation axis $\GammaR$.

\subsection{Modified $L^2_{-1}(\Omega) \cap H(\mathrm{div}, \Omega)$-conforming interpolation}
For any triangle $T \in \mathcal{T}$, let $\mathcal{E}(T)$ be the set of all edges of $T$. We define the set 
\begin{align}\label{eq:ER}
  \mathcal{E}_R \coloneqq \lbrace E \in \mathcal{E} : E \cap \GammaR \neq \varnothing, E \not\subseteq \GammaR \rbrace,
\end{align}
collecting all interior edges that have one vertex on the rotation axis. Further, let $\TR \coloneqq \{ T \in \mathcal{T} : T \cap \GammaR \neq \emptyset\}$ be the set of triangles adjacent to the rotation axis. 
We distinguish two types of such triangles (see Figure~\ref{fig:type_1_2_triangles}): a \emph{type~1} triangle has exactly one vertex on $\GammaR$, while a \emph{type~2} triangle has exactly two vertices on $\GammaR$. Both types have two edges belonging to $\mathcal{E}_R$ and require special care in some
parts of the analysis for weighted Sobolev norms.

We are interested in a reconstruction operator into the space of piecewise linear functions that vanish at the rotation axis, and which is $H(\mathrm{div})$-conforming in the sense
of normal-continuity. In this case, considering a type~2 triangle,
such a polynomial must have the form \(r \bm{P}_0(T)\). Unfortunately, this is not a classical lowest order Raviart--Thomas function (if it is non-zero), but motivates to extend the space by certain polynomial functions. 
In the following, we design a suitable reconstruction operator based on modified Raviart--Thomas basis functions, which can be understood as a constrained $\textrm{BDM}_1$ space.

\begin{figure}[htbp]
  \centering
  \includegraphics[width=0.8\textwidth]{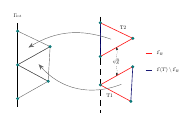}
  \caption{Illustration of type~2 ($\text{T}2$) and type~1 ($\text{T}1$) triangles adjacent to the rotation axis $\GammaR$. Both types of triangles have two edges belonging to $\mathcal{E}_R$ on which we define a modified basis function $\psi_E^{\mathrm{R}}$ in \eqref{eqn:modified_RT0_function}.}
  \label{fig:type_1_2_triangles}
\end{figure}

\begin{figure}
  \centering
  \begin{minipage}[c]{0.88\textwidth}
    \begin{tabular}{@{}c@{\hspace{2pt}}c@{\hspace{2pt}}c@{\hspace{2pt}}c@{}}
      \footnotesize$\psi_{E_1}^{\mathrm{RT}0}$ &
      \footnotesize$\psi_{E_2}^{\mathrm{RT}0}$ & 
      \footnotesize$\psi_{E_3}^{\mathrm{RT}0}$ &
      \footnotesize$\psi_{E_4}^{\mathrm{RT}0}$ \\[2pt]
      \includegraphics[width=0.235\linewidth]{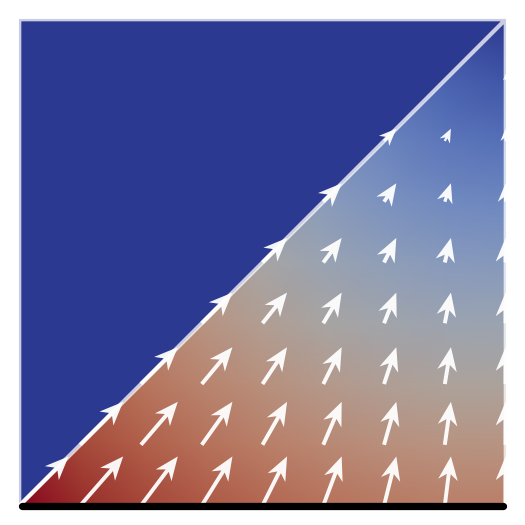} &
      \includegraphics[width=0.235\linewidth]{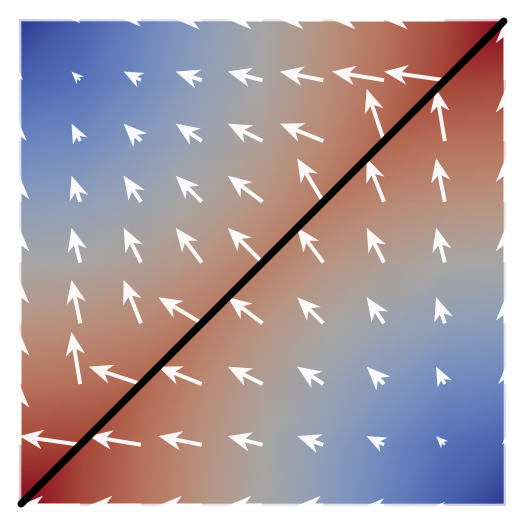} &
      \includegraphics[width=0.235\linewidth]{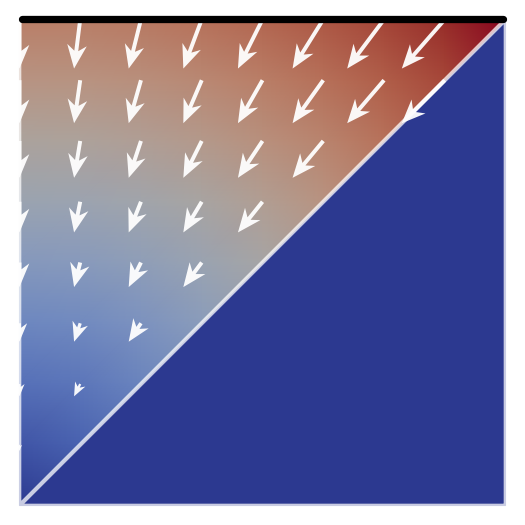} &
      \includegraphics[width=0.235\linewidth]{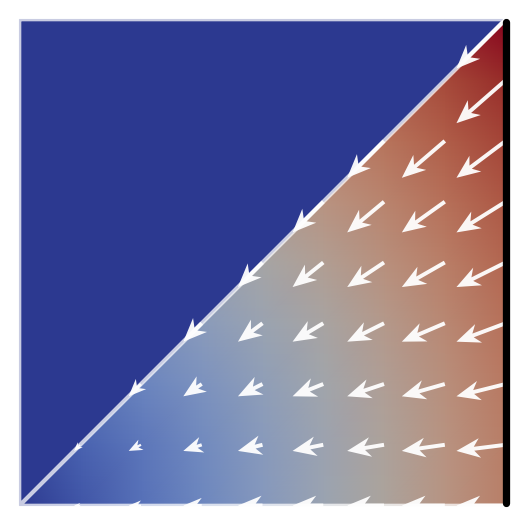} \\
      \footnotesize$\psi_{E_1}^{\mathrm{R}}$&
      \footnotesize$\psi_{E_2}^{\mathrm{R}}$&
      \footnotesize$\psi_{E_3}^{\mathrm{R}}$&
      \footnotesize$\psi_{E_4}^{\mathrm{RT0}}$\\[2pt]
      \includegraphics[width=0.235\linewidth]{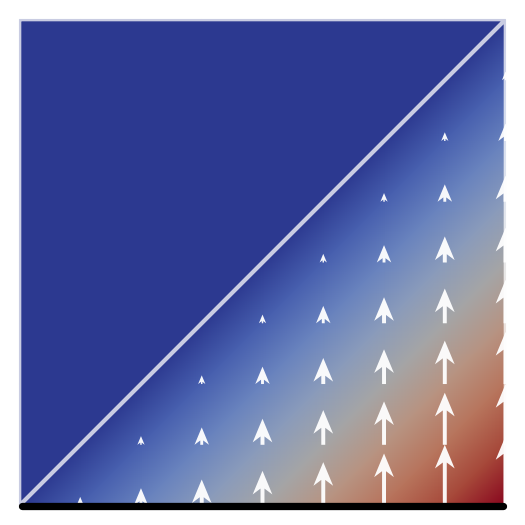} &
      \includegraphics[width=0.235\linewidth]{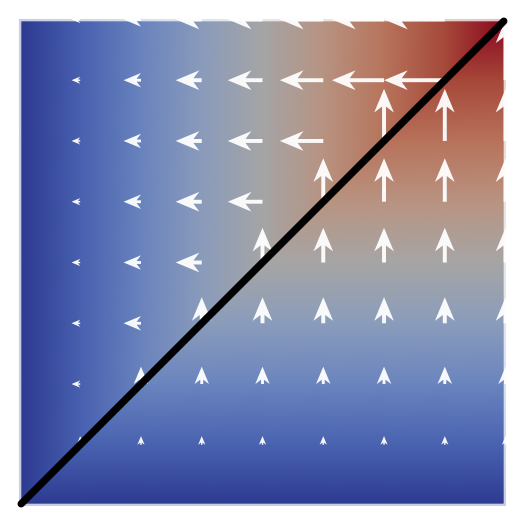} &
      \includegraphics[width=0.235\linewidth]{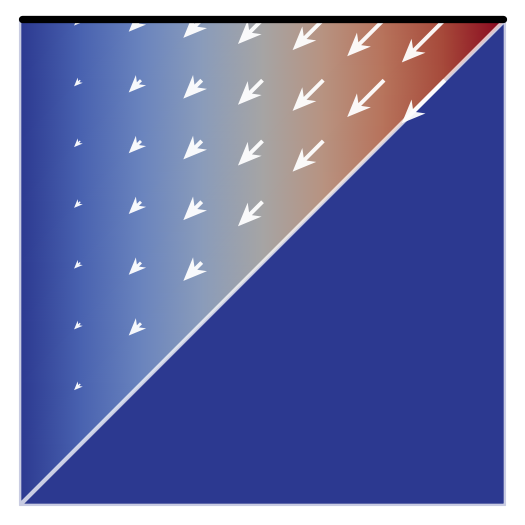} &
      \includegraphics[width=0.235\linewidth]{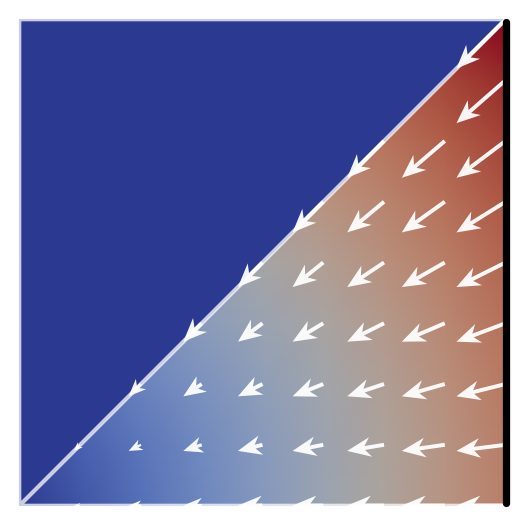}
    \end{tabular}
  \end{minipage}\hspace{0.01\textwidth}%
  \begin{minipage}[c]{0.07\textwidth}
    \vspace*{0.36cm}
    \includegraphics[width=\textwidth]{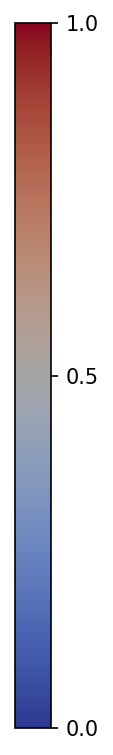}
  \end{minipage}
  \caption{\label{fig:modified_RT0_functions}
    Top row: Standard Raviart--Thomas basis functions on a simple two element mesh ($\GammaR$ corresponds to the left edge).
    Bottom row: Modified Raviart--Thomas basis functions. Edges corresponding to the shape functions are highlighted.
    The color encodes the normalized magnitude.
  }
\end{figure}

Consider an edge $E = \mathrm{conv} \lbrace N_i, N_j \rbrace \in \mathcal{E}_R$
with vertex $N_j = (r_j, z_j) \in \GammaR$ on the rotation axis
(recall that both type 1 and type 2 triangles have two of them).
For this edge $E$, we define a modified $\mathrm{RT}_{0}$ function by
\begin{align}\label{eqn:modified_RT0_function}
  \psi_{E}^\mathrm{R} := 2 \mathrm{curl} (\lambda_j) \lambda_i,
\end{align}
where $\lambda_i$ and $\lambda_j$ are the standard barycentric coordinates associated to the vertices $N_i$ and $N_j$, respectively. 
It can be obtained by a linear combination of the standard
$\mathrm{RT}_{0}$ and $\mathrm{BDM}_{1}$ basis functions which (up to scaling factors)
usually have the form
\begin{align}\label{eq::std_basisfunctions}
  \psi_E^{\mathrm{RT}0}  := \mathrm{curl} (\lambda_i) \lambda_j - \mathrm{curl} (\lambda_j) \lambda_i
  \quad \text{and} \quad
  \psi_E^{\mathrm{BDM}1} := \mathrm{curl} (\lambda_i \lambda_j).
\end{align}
Figure~\ref{fig:modified_RT0_functions} displays the four
classical and the four modified Raviart--Thomas basis functions on a triangulation
with two triangles, one type 1 and one type 2 triangle.
Observe how all modified basis functions vanish at the rotation axis on the left.
The following Lemma collects some useful properties of the modified basis functions.

\begin{lemma}[Properties of modified basis functions]\label{lem:props_mod_basis}
  The function \eqref{eqn:modified_RT0_function}
  for an edge $E = \mathrm{conv} \lbrace N_i, N_j \rbrace \in \mathcal{E}_R$
  with $N_j \in \GammaR$ and $N_i \notin \GammaR$
  has the following properties:
  \begin{itemize}
     \item[(i)] $\int_E \psi^{\mathrm{R}}_E \cdot \bm{n} \ds = \pm 1$ (sign depends on orientation of $\bm{n}$), 
     \item[(ii)] $\int_{\tilde E} \psi^{\mathrm{R}}_E \cdot \bm{n} \ds = 0$ for all other edges $\tilde E \in \mathcal{E}$ with $\tilde E \neq E$,
     \item[(iii)] $\psi_E^{\mathrm{R}}(N_j) = 0$,
     \item[(iv)] $\| \psi^{\mathrm{R}}_E \|_{L^2_{-1}(T)} \lesssim h_T^{-1/2}$.
  \end{itemize}
  Moreover, in a type 1 triangle on the edge $E \in \mathcal{E}(T) \setminus \mathcal{E}_R$ away from
  the rotation axis, it also holds
  \begin{itemize}
     \item[(v)] $\| \psi^{\mathrm{RT}0}_E \|_{L^2_{-1}(T)} \lesssim h_T^{-1/2}$,
     \item[(vi)] $\| \psi^{\mathrm{BDM}1}_E \|_{L^2_{-1}(T)} \lesssim h_T^{-1/2}$.
  \end{itemize}
  {On triangles $T \in \mathcal{T} \setminus \TR$, it holds
  $\| \psi^{\mathrm{RT}0}_E \|_{L^2_{-1}(T)} \! \! \approx \! (\min_{x \in T} r(x))^{-1/2}  \! \approx \! \| \psi^{\mathrm{BDM}1}_E \|_{L^2_{-1}(T)}.$}
\end{lemma}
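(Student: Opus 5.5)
The proof works elementwise with barycentric coordinates. Since $\psi^{\mathrm{R}}_E = 2\,\mathrm{curl}(\lambda_j)\lambda_i$ is linear on each triangle containing $E$, it is convenient to first fix the normalisation of the normal fluxes. On any triangle $T$ containing a vertex $N_j$, $\mathrm{curl}(\lambda_j)$ is constant. For an edge $\tilde E\subset T$ with normal $\bm n$, $\mathrm{curl}(\lambda_j)\cdot\bm n=\pm\partial_{\bm t}\lambda_j$, where $\bm t$ is the tangent of $\tilde E$.

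\emph{Steps (i) and (ii).} I argue edge by edge.
\begin{itemize}
\item On the edge $\tilde E$ opposite $N_j$, $\lambda_j$ is constant, so $\partial_{\bm t}\lambda_j=0$ and the flux vanishes.
\item On the edge opposite $N_i$, the factor $\lambda_i\equiv 0$, so the flux vanishes.
\item On $E$ itself, $\partial_{\bm t}\lambda_j=\pm 1/|E|$ and $\int_E\lambda_i\ds=|E|/2$, so the flux equals $2\cdot(\pm1/|E|)\cdot|E|/2=\pm1$.
\end{itemize}
The function is supported only on the (at most two) triangles sharing $E$, so every other edge receives zero flux. The computation on $E$ gives the same value from both sides, because the tangential derivative of the continuous function $\lambda_j$ along $E$ is single-valued. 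Hence the normal component is continuous across $E$, and $\psi_E^{\mathrm R}\in H(\mathrm{div})$. (Indeed $\psi_E^{\mathrm R}\cdot\bm n$ is the same linear function on $E$ from both sides.)

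\emph{Step (iii).} This is immediate from $\lambda_i(N_j)=0$.

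\emph{Step (iv).} This is the core estimate. On $T$ we have $|\psi_E^{\mathrm R}|\lesssim h_T^{-1}\lambda_i$ by shape regularity, since $|\mathrm{curl}\lambda_j|\lesssim h_T^{-1}$. It therefore suffices to show
\[
\int_T r^{-1}\lambda_i^2\drdz\lesssim h_T .
\]
I treat the two triangle types separately.
\begin{itemize}
\item \emph{Type 2.} The vertex $N_i$ lies on the axis together with $N_j$, but $E\in\mathcal E_R$ requires $N_i\notin\GammaR$. So $N_i$ is the off-axis vertex and $\lambda_i$ is an affine function vanishing on the axis edge, i.e.\ $\lambda_i=r/r_i$ with $r_i\approx h_T$. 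Then $\int_T r^{-1}r^2/r_i^2\lesssim h_T^{-2}\cdot h_T\cdot h_T^2=h_T$.
\item \emph{Type 1.} Only $N_j$ lies on the axis. I use $\lambda_i\lesssim$ (distance to $N_j$)$/h_T$ and the geometric fact that $r\gtrsim$ (distance to $N_j$) inside $T$. This holds because $T$ touches the axis only at $N_j$ and shape regularity bounds the angle between $T$ and the axis from below. Then $r^{-1}\lambda_i^2\lesssim \rho/h_T^2$ with $\rho$ the polar radius around $N_j$, and integrating in polar coordinates gives $\int_0^{h_T}\rho\cdot\rho\,\mathrm d\rho/h_T^2\approx h_T$.
\end{itemize}

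The main obstacle is this type 1 geometric lower bound $r\gtrsim|x-N_j|$. I would justify it via the minimum angle condition, since the triangle lies in a cone with apex $N_j$ whose opening stays away from the axis direction. This is delicate if an edge of $T$ is nearly parallel to the axis, but that edge would then belong to $\GammaR$ or make $T$ type 2.

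\emph{Steps (v) and (vi).} For the edge away from the axis in a type 1 triangle, with vertices $N_k,N_l\notin\GammaR$, both $\psi^{\mathrm{RT}0}_E$ and $\psi^{\mathrm{BDM}1}_E$ contain only factors $\lambda_k,\lambda_l$, which vanish at $N_j$. So again $|\psi|\lesssim h_T^{-1}(\lambda_k+\lambda_l)\lesssim h_T^{-2}|x-N_j|$, and the same polar computation applies. For $\psi^{\mathrm{BDM}1}$ one writes $\mathrm{curl}(\lambda_k\lambda_l)=\lambda_k\mathrm{curl}\lambda_l+\lambda_l\mathrm{curl}\lambda_k$.

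\emph{Final claim.} For $T\notin\TR$ the weight $r^{-1}$ is comparable on $T$ to neither bound uniformly. Instead, $\int_T r^{-1}|\psi|^2$ is bounded above by $(\min_T r)^{-1}\|\psi\|_{L^2(T)}^2\approx(\min_T r)^{-1}$, using $\|\psi\|_{L^2(T)}\approx 1$ for the standard (flux-normalised) basis functions, which have size $h_T^{-1}$. The matching lower bound follows by restricting the integral to a subtriangle of comparable size near the point of minimal $r$, where $r\lesssim \min_T r+h_T$ and $|\psi|\gtrsim h_T^{-1}$. The equivalence should be read in this sense, with the dependence on $h_T$ hidden by the convention that the standard basis functions are normalised as above.
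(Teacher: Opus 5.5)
Your argument is correct in substance, but for (iv) it does not follow the paper's route.

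\textbf{How the paper argues.} It treats both triangle types at once. Since $\lambda_i(N_j)=0$ and $N_j$ lies on $r=0$, it writes $\lambda_i=\nabla\lambda_i\cdot(r,z-z_j)$ and applies Cauchy--Schwarz to get $\lambda_i^2\le|\nabla\lambda_i|^2\bigl(r^2+(z-z_j)^2\bigr)$. It then integrates $\bigl(r^2+(z-z_j)^2\bigr)/r$ over $T$ with a polar-type substitution centred at $N_j$. Items (i)--(iii), (v), (vi) and the final equivalence are only asserted.

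\textbf{Comparison with your route.} Your type~1 argument is the same idea made precise: the cone bound $r\gtrsim|x-N_j|$ is exactly what makes $|x-N_j|^2/r\lesssim|x-N_j|$ integrable. Your separate type~2 argument via $\lambda_i=r/r_i$ is a genuine improvement, not a cosmetic split. On a type~2 triangle the paper's intermediate integral $\int_T (z-z_j)^2 r^{-1}\drdz$ is infinite, because $T$ has a whole edge on $r=0$ along which $z-z_j\neq 0$. The paper's substitution $z-z_j=r\sin\alpha$ with Jacobian $r\cos\alpha$ is not the angular parametrisation; that would be $z-z_j=r\tan\alpha$ with Jacobian $r/\cos^2\alpha$. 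It therefore hides precisely the factor $1/\cos\alpha$ that degenerates as $\alpha\to\pm\pi/2$. Using the exact formula for $\lambda_i$, which contains no $z$-dependence, is what makes type~2 work.

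\textbf{What you should repair: the cone bound.} It does not follow from shape regularity of $T$ alone. The reason you give (a nearly axis-parallel edge would lie in $\GammaR$ or make $T$ type~2) is not valid. Take $N_j=(0,0)$, $(\delta,h)$, $(h,0)$ with $\delta\ll h$. This triangle has angles close to $90^\circ,45^\circ,45^\circ$ and only one vertex on the axis. Yet for $E$ joining $N_j$ to $(\delta,h)$ one has $\lambda_i=z/h$ and $\|\psi^{\mathrm{R}}_E\|_{L^2_{-1}(T)}^2\approx h^{-1}\log(h/\delta)$.

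What excludes this is shape regularity of the whole triangulation together with the geometry of $\Omega$:
\begin{itemize}
\item An interior vertex is surrounded by a patch of triangles of diameter comparable to $h_T$. That patch contains a ball of radius $\gtrsim h_T$ inside $\Omega\subset\{r\ge 0\}$, so the vertex has distance $\gtrsim h_T$ from the axis.
\item For a vertex on $\Gamma$, you additionally need the fixed positive interior angles of $\Omega$ where $\Gamma$ meets the axis.
\end{itemize}

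\textbf{The same fact closes the final claim.} For $T\in\mathcal{T}\setminus\TR$ it gives $\min_T r\gtrsim h_T$, hence $\max_T r\lesssim\min_T r$. You need this, because your lower bound only yields $\|\psi\|_{L^2_{-1}(T)}^2\gtrsim(\min_T r+h_T)^{-1}$. That is comparable to $(\min_T r)^{-1}$ only under this condition. The paper uses the same fact implicitly, as $r_{\max}/r_{\min}\simeq 1$, in the proof of Theorem~\ref{thm:modified_interpolation_estimate}.
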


\begin{proof}
  The first three properties follow from straightforward calculations. For property (iv), let $z_j$ be the $z$-coordinate of the vertex $N_j \in \GammaR$. Then the value of $\lambda_i$ at a point $(r,z)$ can be expressed as $\lambda_i = \nabla \lambda_i \cdot (r, z - z_j)$, because $\lambda_i$ is zero at the vertex $N_j$. 
This gives
  \begin{align*}
    \| \psi^{\mathrm{R}}_E \|^2_{L^2_{-1}(T)}
    & = 4 \lvert \mathrm{curl}(\lambda_j) \rvert^2 \int_T \frac{\lambda_i^2}{r} \drdz\\
    & = 4 \lvert \mathrm{curl}(\lambda_j) \rvert^2 \int_T \frac{\left( \nabla \lambda_i \cdot (r, z - z_j)\right)^2}{r} \drdz\\
    & \leq 4 \lvert \mathrm{curl}(\lambda_j) \rvert^2 \lvert \nabla(\lambda_j) \rvert^2 \int_T \frac{r^2 + (z - z_j)^2}{r} \drdz.
  \end{align*}
  The last integral can be transformed with the
 transformation $(r,z-z_j) = \Phi(r,\alpha) = (r, r \sin(\alpha))$ and $\mathrm{det}(D \Phi) = r \cos(\alpha)$, where $\alpha \in [\alpha_\text{min}, \alpha_\text{max}]$ is the angle between the $r$-axis and the line connecting the point $(r,z) \in T$ with the vertex $N_j$. Then we obtain
 \begin{align*}
 \int_T \frac{r^2 + (z - z_j)^2}{r} \drdz
 & = \int_{\alpha_\text{min}}^{\alpha_\text{max}} \int_0^{R(\alpha)} r^2(1 + \sin(\alpha))^2 \cos(\alpha) \drda\\
 & \leq \max_{\alpha \in (\alpha_\text{min}, \alpha_\text{max})} \lvert 1 + \sin(\alpha) \rvert^2 h_T^3.
 \end{align*}
 The proof of (v) and (vi) is very similar to the proof of (iv).
 This concludes the proof.
\end{proof}

\begin{definition}[Modified interpolation operators] \label{def::modifiedinterpolation}
We recall the definition of the set $\mathcal{E}_R$ in \eqref{eq:ER} for which
the modified basis functions shall be used.
Then we define the space
\begin{align*}
  \mathrm{RT}^{\mathrm{axi}}_0(\mathcal{T}) :=
  \mathrm{span}
  \Big[
 \Big \lbrace \psi^{\mathrm{R}}_E : E \in \mathcal{E}_R \Big\rbrace
  \cup \Big\lbrace \psi_E^{\mathrm{RT}0} : E \in \mathcal{E} \setminus \mathcal{E}_R \Big\rbrace
  \Big],
\end{align*}
where linear independence follows from property (i) and (ii) of Lemma~\ref{lem:props_mod_basis}. Further, we define the space 
\begin{align*} 
  \mathrm{BDM}^{\mathrm{axi}}_1(\mathcal{T})
  &:=
  \mathrm{RT}^{\mathrm{axi}}_0(\mathcal{T})
  +
  \mathrm{span}
  \left\lbrace \psi_E^{\mathrm{BDM}1} : E \in \mathcal{E} \setminus \mathcal{E}_R \right\rbrace.
\end{align*}
The modified interpolations $\Pi_{\mathrm{RT}0}^\mathrm{axi} : \V \rightarrow \mathrm{RT}^{\mathrm{axi}}_0(\mathcal{T})$ and $\Pi_{\mathrm{BDM}1}^\mathrm{axi} : \V \rightarrow \mathrm{BDM}^{\mathrm{axi}}_1(\mathcal{T})$ of some function $\w \in \V$ are uniquely characterized by
\begin{align} \label{eq::modified_rt_dofs} 
  \int_E (\Pi_{\mathrm{RT}0}^\mathrm{axi}\w - \w) \cdot \bm{n} \ds
  &= 0 \quad \text{for all } E \in \mathcal{E},
\end{align}
and 
\begin{subequations}\label{eq::modified_bdm_dofs}
\begin{align}
  \int_E (\Pi_{\mathrm{BDM}1}^\mathrm{axi}\w - \w) \cdot \bm{n} \ds 
  &= 0 \quad \text{for all } E \in \mathcal{E}_R,\\
  \int_E (\Pi_{\mathrm{BDM}1}^\mathrm{axi}\w - \w) \cdot \bm{n} \,r_h \ds
  &= 0 \quad \text{for all } E \in \mathcal{E}\setminus \mathcal{E}_R, r_h \in P_1(E).
\end{align}
\end{subequations}
\end{definition}

Note, that we will also use local element-wise versions of the above interpolation operators. In this case the integrals in \eqref{eq::modified_rt_dofs} and \eqref{eq::modified_bdm_dofs} are only taken over the edges of the respective element, and we interpolate into the space $\mathrm{RT}^{\mathrm{axi}}_0(T)$ and $\mathrm{BDM}^{\mathrm{axi}}_1(T)$, respectively. To simplify the notation, we will not introduce a new symbol for the local interpolation operators but rather use the restriction operator, e.g., in Lemma~\ref{lem::consistency} in the next section. As usual, if the interpolated function is smooth enough, e.g., an element of $\V$, the restriction of the global interpolation operator to an element $T$ coincides with the local interpolation operator on that element.

\begin{lemma} \label{lem:modified_BDM1_space}
  There holds 
\begin{align*}
  \mathrm{BDM}^{\mathrm{axi}}_1(\mathcal{T})
   = \Big\lbrace \v_h \in \mathrm{BDM}_1(\mathcal{T}) :
  \v_h = \bm{0} \text{ on } \GammaR \Big\rbrace.
\end{align*}
\end{lemma}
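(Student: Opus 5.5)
We prove both inclusions. Write $\mathcal{W} := \lbrace \v_h \in \mathrm{BDM}_1(\mathcal{T}) : \v_h = \bm{0} \text{ on } \GammaR \rbrace$.

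\emph{Inclusion $\subseteq$.} It suffices to check that each spanning function lies in $\mathcal{W}$, i.e.\ is piecewise linear, normal-continuous, and vanishes on $\GammaR$.
\begin{itemize}
\item[(a)] \emph{Modified functions.} For $E \in \mathcal{E}_R$, the function $\psi^{\mathrm{R}}_E = 2\,\mathrm{curl}(\lambda_j)\lambda_i$ is a linear combination of $\psi^{\mathrm{RT}0}_E$ and $\psi^{\mathrm{BDM}1}_E$ from \eqref{eq::std_basisfunctions}. Hence it lies in $\mathrm{BDM}_1(\mathcal{T})$, with the barycentric coordinates understood on the (one or two) triangles sharing $E$. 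It vanishes wherever $\lambda_i = 0$. Since $N_i \notin \GammaR$ and $\GammaR$ is a straight segment on $\{r=0\}$, every point of $\GammaR \cap T$ lies on the edge of $T$ opposite $N_i$ (type 2) or equals $N_j$ (type 1). In both cases $\lambda_i = 0$ there.
\item[(b)] \emph{Remaining RT and BDM functions.} For $E \in \mathcal{E} \setminus \mathcal{E}_R$, the functions $\psi^{\mathrm{RT}0}_E$ and $\psi^{\mathrm{BDM}1}_E$ are supported on triangles containing $E$. If neither endpoint of $E$ lies on $\GammaR$, both functions contain factors $\lambda_i$ and $\lambda_j$ of the endpoints of $E$. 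Every point of $\GammaR \cap T$ lies on an edge of $T$ through the axis vertex or vertices, which are distinct from $N_i$ and $N_j$. Hence at each such point one of $\lambda_i, \lambda_j$ vanishes, and both terms vanish.
\item[(c)] \emph{Edges on the axis.} If $E \subseteq \GammaR$, the spanning function is $\psi^{\mathrm{RT}0}_E$, which does \emph{not} vanish on $\GammaR$. So either these edges must be excluded, or the statement is read with the understanding that degrees of freedom on $\GammaR$ are set to zero, as for an element of $\V$, which has $\int_E \w\cdot\bm{n} = 0$ there. I would make this explicit and interpret the span over $\mathcal{E}\setminus(\mathcal{E}_R \cup \{E \subseteq \GammaR\})$.
\end{itemize}

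\emph{Inclusion $\supseteq$, by dimension counting.} It suffices to show $\dim \mathcal{W} \le \dim \mathrm{BDM}^{\mathrm{axi}}_1(\mathcal{T})$, after checking that the spanning set is linearly independent.

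\emph{Independence.} The functions $\psi^{\mathrm{RT}0}_E$ and $\psi^{\mathrm{BDM}1}_E$ for non-axis edges are the standard BDM$_1$ basis. Replacing $\psi^{\mathrm{RT}0}_E$ by $\psi^{\mathrm{R}}_E$ for $E \in \mathcal{E}_R$, while dropping $\psi^{\mathrm{BDM}1}_E$ there, yields an independent set. To see this, test with the standard BDM$_1$ degrees of freedom (normal moments against $P_1(E)$). The modified functions are the only ones with nonzero normal moments on their own edges $E \in \mathcal{E}_R$, and on the other edges the standard duality applies. Hence the span has dimension
\[
\dim \mathrm{BDM}_1 - |\mathcal{E}_R| - 2\,|\{E \subseteq \GammaR\}|.
\]

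\emph{Constraint count (main step).} A BDM$_1$ function is determined by two normal moments per edge. The condition $\v_h = \bm{0}$ on $\GammaR$ gives:
\begin{itemize}
\item[(a)] for each edge $E \subseteq \GammaR$, the normal trace vanishes on $E$, which is two conditions;
\item[(b)] for each edge in $\mathcal{E}_R$, the normal trace at the axis vertex $N_j$ vanishes, which is one condition.
\end{itemize}
These conditions are linear in the BDM degrees of freedom. A function whose normal traces on all edges through $N_j$ vanish at $N_j$ has zero value at $N_j$ in every adjacent triangle: in each triangle the two edges through $N_j$ have independent normals, so both components vanish. Together with (a), this forces $\v_h|_{\GammaR} = 0$, since $\v_h$ is linear on each edge of $\GammaR$ and zero at its endpoints. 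Conversely, $\v_h|_{\GammaR} = 0$ implies (a) and (b). So $\mathcal{W}$ is cut out by exactly these constraints, which are independent because they act on distinct degrees of freedom. The dimensions therefore match, and equality follows.

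The main obstacle is this last step. One must verify that the pointwise normal conditions at the axis vertices are equivalent to full vanishing of $\v_h$ on $\GammaR$, and track carefully which vertex/edge configurations (type 1 versus type 2) occur.
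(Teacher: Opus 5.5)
Your argument is correct and follows the paper's overall strategy: show that the spanning functions lie in the constrained space, then compare dimensions. The difference is in how you count.

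\textbf{Global versus local counting.} The paper counts locally. On a type~2 triangle, vanishing on $\GammaR$ leaves a $2$-dimensional subspace of $\bm{P}_1(T)$, matching the two modified functions. On a type~1 triangle it leaves a $4$-dimensional subspace, matching the two modified functions plus $\psi^{\mathrm{RT}0}_E,\psi^{\mathrm{BDM}1}_E$ on the off-axis edge. The paper then appeals to normal continuity to pass to the global statement. You instead count codimension globally through the normal-trace isomorphism $\mathrm{BDM}_1(\mathcal{T})\cong\prod_{E\in\mathcal{E}}P_1(E)$. You show that vanishing on $\GammaR$ is equivalent to one pointwise condition per $E\in\mathcal{E}_R$ and two conditions per axis edge. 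Because both the constraints and the basis functions are edge-based, your version handles the local-to-global step rigorously, which the paper only asserts. The paper's local version has one advantage: it is exactly the statement reused afterwards to show that $r\mathbf{C}$ is reproduced on $T\in\TR$. Your argument does, however, apply verbatim to a single triangle.

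\textbf{Your point (c).} This is a genuine correction, not pedantry. Edges $E\subseteq\GammaR$ are not in $\mathcal{E}_R$, so the spanning sets as written contain $\psi^{\mathrm{RT}0}_E$ and $\psi^{\mathrm{BDM}1}_E$ for axis edges, and their normal traces do not vanish on $E$. The paper's proof overlooks this when it claims $\lambda_i=\lambda_j=0$ on $\GammaR$ for all $E\in\mathcal{E}\setminus\mathcal{E}_R$; its count of two functions on a type~2 triangle silently drops the axis edge. Excluding axis edges, as you propose, is the right reading. It does not affect the interpolants, since $r\w$ has vanishing normal moments on those edges.

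\textbf{A small repair in step (b).} The reason ``one of $\lambda_i,\lambda_j$ vanishes'' is not enough to kill $\psi^{\mathrm{RT}0}_E=\mathrm{curl}(\lambda_i)\lambda_j-\mathrm{curl}(\lambda_j)\lambda_i$, because each term carries a different factor. The correct reason is that a triangle containing an edge with both endpoints off the axis has at most one axis vertex $N_k$. Hence $T\cap\GammaR\subseteq\{N_k\}$, and there both $\lambda_i$ and $\lambda_j$ vanish.
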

\begin{proof}
  First note, that the standard $\mathrm{BDM}_1(\mathcal{T})$ basis function \eqref{eq::std_basisfunctions}, associated to edges $E \in \mathcal{E} \setminus \mathcal{E}_R$ vanish on the rotation axis since $\lambda_i=\lambda_j=0$  on $\GammaR$. The same holds for all functions in $ \mathrm{RT}^{\mathrm{axi}}_0(\mathcal{T})$ due to Lemma \ref{lem:props_mod_basis}, property (iii).
  
  We will work with a dimension argument. For this let $\v_h \in \mathrm{BDM}_1(\mathcal{T})$ with $\v_h = \bm{0}$ on $\GammaR$. On a type 2 triangle, the constraint $\v_h = \bm{0}$ on $\GammaR$ reduces the dimension of the local linear polynomial space from 6 to 2, which is the number of modified basis functions on that triangle (which are linearly independent by Lemma \ref{lem:props_mod_basis}). 
  On a type 1 triangle, the constraint $\v_h = \bm{0}$ on $\GammaR$ reduces the dimension of the local linear polynomial space from 6 to 4. 
  Again, the two modified basis functions and the two standard $\mathrm{BDM}_1$ basis functions associated to the edge $E \in \mathcal{E}(T) \setminus \mathcal{E}_R$ are linearly independent.   
  Thus, in both cases the dimensions of the local spaces coincide. Since the modified basis functions are also normal continuous (being a linear combination of standard $\mathrm{BDM}_1$ basis functions), we conclude the proof.
\end{proof}

\subsection{Improved consistency error estimate}
{This section shows an improved interpolation estimate for the modified
reconstruction operators $\Pi_{\mathrm{RT}0}^\mathrm{axi}$ and $\Pi_{\mathrm{BDM}1}^\mathrm{axi}$.}

In preparation, we show the following lemma.

\begin{lemma} \label{lem::consistency}
  Let $T \in \TR$. For all $\mathbf{C} \in \mathbf{P}_0(T)$, we have that 
  \begin{align*}
      \Pi_{\mathrm{BDM}1}^\mathrm{axi}(r \mathbf{C})|_T = r \mathbf{C}
      \quad \text{for all } \mathbf{C} \in \mathbf{P}_0(T).
   \end{align*}
   Further, if $T$ is of type 2, then $\Pi_{\mathrm{RT}0}^\mathrm{axi}(r \mathbf{C})|_T = r \mathbf{C}$ for all $\mathbf{C} \in \mathbf{P}_0(T)$. If $T$ is of type 1, then 
   \begin{align}\label{eqn:constant_reconstruction_error_estimate}
      \| r \mathbf{C} - \Pi_{\mathrm{RT}0}^\mathrm{axi}(r \mathbf{C}) \|_{L^2_{-1}(T)}
      \lesssim \lvert E \rvert \lvert r_2 - r_3 \rvert \lvert \mathbf{C} \rvert
      \| \psi_E^{\mathrm{BDM}1} \|_{L^2_{-1}(T)},
   \end{align}
   where $E \in \mathcal{E}(T) \setminus \mathcal{E}_R$ is the edge away from the rotation axis and $r_2 - r_3$ is the difference of the $r$-coordinates of the two vertices of $E$. 
\end{lemma}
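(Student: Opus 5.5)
We work on the single element $T\in\TR$ with the local interpolants; $r\mathbf{C}$ is a linear polynomial, so global and local interpolants agree on $T$. The plan is to show that $r\mathbf{C}$ lies in the local target space whenever it should, and otherwise to write the interpolation error explicitly in terms of one basis function.

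\textbf{Step 1 (BDM case).} $r\mathbf{C}\in\bm P_1(T)$ vanishes on $\GammaR\cap T$, because $r=0$ there. By the local version of Lemma~\ref{lem:modified_BDM1_space}, the local space $\mathrm{BDM}^{\mathrm{axi}}_1(T)$ is exactly $\{\v\in\bm P_1(T): \v=\bm 0 \text{ on } \GammaR\cap T\}$, so $r\mathbf{C}\in \mathrm{BDM}^{\mathrm{axi}}_1(T)$. A type~1 triangle has only a vertex on the axis. In that case the constraint ``vanishes at $N_j$'' is what the dimension count of Lemma~\ref{lem:modified_BDM1_space} uses (6 minus 2 gives 4), so the characterisation still applies.

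The degrees of freedom \eqref{eq::modified_bdm_dofs} are unisolvent on this local space, by the dimension count together with Lemma~\ref{lem:props_mod_basis}(i),(ii). Hence the interpolant reproduces elements of the space, and $\Pi_{\mathrm{BDM}1}^\mathrm{axi}(r\mathbf{C})|_T=r\mathbf{C}$.

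\textbf{Step 2 (RT, type 2).} On a type~2 triangle the local $\mathrm{RT}^{\mathrm{axi}}_0(T)$ and $\mathrm{BDM}^{\mathrm{axi}}_1(T)$ coincide: the third edge lies on $\GammaR$ and carries no degree of freedom, and the dimension is 2. Moreover, the degrees of freedom \eqref{eq::modified_rt_dofs} equal those of \eqref{eq::modified_bdm_dofs}, since both edges belong to $\mathcal{E}_R$. Step~1 therefore gives $\Pi_{\mathrm{RT}0}^\mathrm{axi}(r\mathbf{C})|_T=r\mathbf{C}$.

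\textbf{Step 3 (RT, type 1).} Write $E=\mathrm{conv}\{N_2,N_3\}$ for the edge away from the axis. By Step~1 we can expand
\[
r\mathbf{C}=a_1\psi^{\mathrm R}_{E_1}+a_2\psi^{\mathrm R}_{E_2}+b\,\psi_E^{\mathrm{RT}0}+c\,\psi_E^{\mathrm{BDM}1}.
\]
The $\mathrm{RT}$ interpolant matches the zeroth normal moments on all three edges. The functions $\psi^{\mathrm R}$ and $\psi^{\mathrm{RT}0}$ reproduce themselves, and $\psi_E^{\mathrm{BDM}1}=\mathrm{curl}(\lambda_2\lambda_3)$ has zero mean normal flux on every edge, so its $\mathrm{RT}$ interpolant vanishes. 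Consequently
\[
r\mathbf{C}-\Pi_{\mathrm{RT}0}^\mathrm{axi}(r\mathbf{C})=c\,\psi_E^{\mathrm{BDM}1},
\]
and it remains to bound $|c|$.

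To obtain $c$, test the expansion against the first moment on $E$, i.e. $\int_E (\cdot)\cdot\bm n\,(\lambda_2-\lambda_3)\ds$. This moment kills the $\psi^{\mathrm{RT}0}$ term, whose normal component is constant on $E$, and the $\psi^{\mathrm R}$ terms, whose normal component is zero on $E$ because $\lambda_i$ or $\mathrm{curl}\lambda_j\cdot\bm n$ vanishes there. Meanwhile $\psi_E^{\mathrm{BDM}1}\cdot\bm n$ is linear on $E$ of size $\sim|E|^{-1}(\lambda_2-\lambda_3)$ up to sign, so the moment of $\psi_E^{\mathrm{BDM}1}$ is $\approx 1$.

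On the left-hand side, $r\mathbf{C}\cdot\bm n$ restricted to $E$ equals $(\mathbf C\cdot\bm n)(r_2\lambda_2+r_3\lambda_3)$. Its moment against $(\lambda_2-\lambda_3)$ is $(\mathbf C\cdot\bm n)(r_2-r_3)|E|/6$. Thus $|c|\lesssim |E|\,|r_2-r_3|\,|\mathbf C|$. Taking the $L^2_{-1}(T)$ norm gives \eqref{eqn:constant_reconstruction_error_estimate}.

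The main obstacle is getting the normalisations and the orthogonality of the edge moments in Step~3 exactly right. This needs a short explicit computation of $\psi_E^{\mathrm{BDM}1}\cdot\bm n$ on $E$, namely $\nabla(\lambda_2\lambda_3)\cdot\bm t$. It also needs a check that $\psi^{\mathrm R}\cdot\bm n$ vanishes on the opposite edge.
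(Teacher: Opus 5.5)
Your proposal is correct and follows the paper's proof essentially step by step. You show that $\Pi_{\mathrm{BDM}1}^\mathrm{axi}$ reproduces $r\mathbf{C}$ via the dimension argument of Lemma~\ref{lem:modified_BDM1_space}, that the two local spaces and degrees of freedom coincide on type~2 triangles, and that on type~1 triangles the error is a multiple of $\psi_E^{\mathrm{BDM}1}$ fixed by the first normal moment of $r\mathbf{C}$ on $E$. Your explicit evaluation $(\mathbf{C}\cdot\bm n)(r_2-r_3)\lvert E\rvert/6$ against $\lambda_2-\lambda_3$ is the same computation the paper performs with Simpson's rule for the weight $\lambda-1/2$.
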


\begin{proof}
  First consider the local $\Pi_{\mathrm{BDM}1}^\mathrm{axi}$ interpolator. In both cases, type 1 and type 2 triangles, the linear function $r \mathbf{C}$ vanishes on $\GammaR$. Thus, with the same dimension argument as in the proof of Lemma~\ref{lem:modified_BDM1_space}, it can be written as a linear combination of the modified basis functions on type 2 triangles and as a linear combination of the modified basis functions and the standard $\mathrm{BDM}_1$ basis function associated to the edge away from the rotation axis on type 1 triangles. This shows that $\Pi_{\mathrm{BDM}1}^\mathrm{axi}(r \mathbf{C})|_T = r \mathbf{C}$ for all $\mathbf{C} \in \mathbf{P}_0(T)$. 

The same argument also holds for $\Pi_{\mathrm{RT}0}^\mathrm{axi}$ on type 2 triangles. On type 1 triangles, the argument does not work since we are missing the additional (standard) $\mathrm{BDM}_1$ basis function on the edge away from the rotation axis. However, the above arguments in combination with the degrees of freedoms given by \eqref{eq::modified_rt_dofs} and \eqref{eq::modified_bdm_dofs} show that the difference can be written as
\begin{multline} \label{eqn:constant_reconstruction_error}
\left\lvert r \mathbf{C} - \Pi_{\mathrm{RT}0}^\mathrm{axi}(r \mathbf{C})|_T \right\rvert
= 
\left\lvert \Pi_{\mathrm{BDM}1}^\mathrm{axi}(r \mathbf{C})|_T - \Pi_{\mathrm{RT}0}^\mathrm{axi}(r \mathbf{C})|_T \right\rvert\\
      = \left \lvert \int_{E} r (\lambda(s) - 1/2) \mathbf{C} \cdot \mathbf{n} \, ds \right\rvert \, \lvert \psi_E^{\mathrm{BDM}1} \rvert,
   \end{multline} 
   where $\lambda$ is one of the barycentric coordinates on $E$. Evaluating the integral with a Simpson rule yields the claim.
\end{proof}

Additionally, we require the following Poincar\'e and trace estimates 
on triangles adjacent to the rotation axis.
\begin{lemma}
\label{lem:weighted_poincare}
  Let $T \in \TR$. For all $\mathbf{v} \in H^1_1(T)$ with $ \int_T rv dx = 0$, it holds that
   \begin{align*}
        \| \v \|_{L^2_1(T)} \lesssim h_T \| \nabla \v \|_{L^2_1(T)}.
     \end{align*}
\end{lemma}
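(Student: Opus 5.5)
The plan is to reduce the estimate to a weighted Poincaré inequality on a fixed reference configuration, with the weight $r$ handled by a scaling argument. The weight $r$ vanishes on part of $\partial T$, so an affine map to a reference triangle does not simply turn $L^2_1(T)$ into $L^2_1(\hat T)$. We therefore use only maps that keep the axis in place. Since $T\in\TR$ touches $\GammaR$, we translate in $z$ so that a vertex on $\GammaR$ sits at the origin, and then scale uniformly by $h_T$. Under $(r,z)=h_T(\hat r,\hat z)$ the weight becomes $r=h_T\hat r$, so
\[
\|\v\|^2_{L^2_1(T)}=h_T^3\|\hat\v\|^2_{L^2_1(\hat T)},\qquad
\|\nabla\v\|^2_{L^2_1(T)}=h_T\|\hat\nabla\hat\v\|^2_{L^2_1(\hat T)}.
\]
The constraint $\int_T r\v=0$ transfers to $\int_{\hat T}\hat r\hat\v=0$. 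It then suffices to show $\|\hat\v\|_{L^2_1(\hat T)}\le C\|\hat\nabla\hat\v\|_{L^2_1(\hat T)}$ with $C$ uniform over the admissible scaled triangles $\hat T$.

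Shape regularity makes the family of $\hat T$ compact: diameter one, one vertex at the origin on the axis, minimal angle bounded below, lying in $\{\hat r\ge0\}$. Types 1 and 2 are both included, with the limits taken within each type. For one fixed $\hat T$, the weighted Poincaré inequality follows from the standard compactness (Peetre–Tartar) argument. The ingredients are:
\begin{itemize}
\item the compact embedding $H^1_1(\hat T)\hookrightarrow L^2_1(\hat T)$, which holds because $r$ is an $A_2$-type weight, or via the Cartesian 3D Rellich theorem on the body of revolution of $\hat T$ as in Remark~\ref{rem:transformations_to_3d};
\item the fact that $\hat\nabla\hat\v=0$ forces $\hat\v$ to be constant, and the mean constraint then forces that constant to be zero.
\end{itemize}
An alternative that bypasses the embedding is to pass to the revolved 3D domain, where the claim is the ordinary Poincaré inequality for mean-free functions. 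The revolved domain (a cone or a truncated cone-like body) is Lipschitz, so the inequality holds there directly.

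Uniformity of the constant over the compact family comes from a contradiction argument. Alternatively, each $\hat T$ can be mapped to a small set of reference triangles by bi-Lipschitz maps that preserve the axis, namely an affine map fixing $\{\hat r=0\}$ with $\hat r\mapsto c\hat r$. Under such a map the weighted norms change only by constants depending on the shape regularity.

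I expect the main obstacle to be this uniformity. In particular, I must check that type 1 triangles, which touch the axis only at a vertex, and type 2 triangles, which touch it along an edge, both admit the 3D Lipschitz or star-shaped revolved-domain argument with controlled constants. The revolved type 1 triangle is a solid with a conical point on the axis, which is still star-shaped with respect to a ball of comparable size. A Poincaré inequality with constant depending only on chunkiness (for example, Bramble–Hilbert or Payne–Weinberger-type for star-shaped domains) then yields the uniform bound. After rescaling, this gives $\|\v\|_{L^2_1(T)}\lesssim h_T\|\nabla\v\|_{L^2_1(T)}$.
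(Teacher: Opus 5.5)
Your route is the same as the paper's, whose entire proof is the sentence ``transform to 3D and apply the standard Poincar\'e inequality''. Your axis-preserving scaling is correct and makes explicit something the paper leaves implicit. Applied componentwise, the condition $\int_T r\,v_k\,\mathrm{d}r\,\mathrm{d}z=0$ is exactly the 3D mean-zero condition, and $\|\nabla v_k\|_{L^2_1(T)}$ is the 3D gradient norm up to a factor $2\pi$.

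The problem is in the step you yourself flag as the main obstacle. For a type~1 triangle, the body of revolution is neither Lipschitz nor star-shaped with respect to any ball.
\begin{itemize}
\item The two edges through the axis vertex $N_j=(0,z_j)$ sweep out two cone surfaces (one possibly flat). These meet only at $N_j$, so the boundary is not locally a graph there.
\item Take any point $p$ of the body with $z_p\neq z_j$, and rotate it by the angle $\pi$; the rotated point is also in the body. The segment joining them passes through the axis point $(0,0,z_p)$, which is not in the closure of the body. Indeed, the open body is a solid torus whose hole has shrunk to the apex.
\end{itemize}
Hence neither Lipschitz-domain Poincar\'e constants nor chunkiness-based ones (Bramble--Hilbert, star-shaped domains) are available for type~1. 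Your uniformity claim, and the statement that the revolved domain is Lipschitz, are therefore unsupported. The fixed-shape inequality does survive via Rellich, because the body still satisfies a cone condition.

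There are three smaller slips in the same place.
\begin{itemize}
\item $r$ is not an $A_2$ weight in the plane, since $r^{-1}$ is not integrable across the axis.
\item Shape-regular type~1 triangles can converge to type~2 ones, for instance when an off-axis vertex tends to the axis. So ``limits taken within each type'' is not available.
\item The axis-preserving affine maps $(r,z)\mapsto(cr,az+br+d)$ do not reduce all type~1 triangles to a small set of references. They reduce them, with distortion controlled by shape regularity, only to the one-parameter family $\hat T_\kappa=\mathrm{conv}\{(0,0),(1,0),(\kappa,1)\}$. Here $\kappa\in(0,1]$ is the ratio of the radial coordinates of the two off-axis vertices, and the farther of these vertices has $r\gtrsim h_T$ by the inradius bound. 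This family degenerates to type~2 as $\kappa\to0$.
\end{itemize}

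The lemma is still true, and you can repair the argument within your framework.

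One option is to use that the scaled type~1 bodies are uniformly John domains, and to invoke Poincar\'e inequalities on John domains.

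A more elementary option is to handle the family $\hat T_\kappa$ directly.
\begin{itemize}
\item The shear $(s,t)\mapsto(s+\kappa t,t)$ maps $\hat T_0=\mathrm{conv}\{(0,0),(1,0),(0,1)\}$ onto $\hat T_\kappa$. Its Jacobian is bounded uniformly for $\kappa\in[0,1]$, and it turns the weight $r$ into $s+\kappa t$.
\item The revolutions of $\hat T_0$ about $\{s=0\}$ and about $\{t=0\}$ are convex cones. This gives Poincar\'e inequalities for the weights $s$ and $t$, with weighted means $c_s$ and $c_t$.
\item Comparing $c_s$ and $c_t$ on a fixed disk where $s,t\ge 1/6$ gives $|c_s-c_t|^2\lesssim\int(s+t)|\nabla w|^2$.
\item It follows that $\int_{\hat T_0}(s+\kappa t)|w-c_s|^2\lesssim\int_{\hat T_0}(s+\kappa t)|\nabla w|^2$, uniformly in $\kappa$.
\item Since $\v$ has zero weighted mean, $\|\v\|_{L^2_1}$ is bounded by its weighted distance to any constant, which closes the estimate.
\end{itemize}
Type~2 triangles map directly onto $\hat T_0$ (revolved about $\{s=0\}$) by an axis-preserving affine map. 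The paper does not address uniformity at all, so this is exactly where your added detail needs to be correct.
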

\begin{proof}
   This follows from transformation to the 3D setting and the
   standard Poincar\'e inequality.
\end{proof}

\begin{lemma}
  Let $T \in \TR$. For any edge $E \in \mathcal{E}(T)$, it holds that
  \begin{align}\label{eqn:coefficient_estimate}
      \int_E r \v \cdot \bm{n} \ds
      \lesssim h_T^{3/2} \| \nabla \v \|_{L^2_1(T)},
  \end{align}
   for all $\v \in \bm{H}^1_1(T)$ with $\int_T r \v \mathit{dx} = 0$.
\end{lemma}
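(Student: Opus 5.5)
We prove \eqref{eqn:coefficient_estimate} by combining a weighted trace inequality on $T$ with the weighted Poincar\'e inequality of Lemma~\ref{lem:weighted_poincare}. The first step is Cauchy--Schwarz with the weight split as $r = r^{1/2}\cdot r^{1/2}$:
\begin{align*}
  \int_E r \v\cdot\bm n \ds \le \Big(\int_E r \ds\Big)^{1/2} \Big(\int_E r \lvert\v\rvert^2 \ds\Big)^{1/2}
  \lesssim h_T \, \| \v \|_{L^2_1(E)}.
\end{align*}
Here we used $\int_E r\ds \le |E| \max_E r \lesssim h_T^2$, because $T\in\TR$ touches the axis and therefore $r\lesssim h_T$ on $T$.

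The second step is a weighted trace inequality
\begin{align*}
  \| \v \|^2_{L^2_1(E)} \lesssim h_T^{-1}\|\v\|^2_{L^2_1(T)} + h_T \|\nabla \v\|^2_{L^2_1(T)}.
\end{align*}
We obtain it via the 3D transformation of Remark~\ref{rem:transformations_to_3d}. Rotating $T$ produces a 3D body $\widehat T$, and rotating $E$ produces a surface $\widehat E$ (a cone, annulus or cylinder piece). We then have $\|\v\|_{L^2_1(E)}^2 = (2\pi)^{-1}\|\widehat\v\|^2_{L^2(\widehat E)}$, and similarly for the volume terms. Since $T\in\TR$, the body $\widehat T$ has diameter $\lesssim h_T$ and is a Lipschitz domain with shape regularity controlled by that of $T$, so the standard scaled trace inequality $\|\widehat \v\|^2_{L^2(\widehat E)}\lesssim h_T^{-1}\|\widehat\v\|^2_{L^2(\widehat T)} + h_T\|\nabla\widehat\v\|^2_{L^2(\widehat T)}$ applies.

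We then transfer back, noting that the 3D gradient contains the additional term $r^{-2}v_r^2$ in the weighted norm. To avoid the $L^2_{-1}$ term we work componentwise with scalar functions: we rotate $v_r$ and $v_z$ separately as axisymmetric scalar fields. For a scalar axisymmetric function the 3D gradient norm equals $\|\nabla v\|_{L^2_1}$ exactly, so no $r^{-1}$ term appears. Alternatively, one can argue directly in 2D on the reference configuration via the substitution $(r,z)\mapsto (r/h_T, z/h_T)$, under which the weight scales homogeneously as $r\mapsto h_T r$.

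Finally, we insert the Poincar\'e estimate of Lemma~\ref{lem:weighted_poincare}, $\|\v\|_{L^2_1(T)}\lesssim h_T\|\nabla\v\|_{L^2_1(T)}$, which applies thanks to the mean condition $\int_T r\v\,dx=0$. This gives $\|\v\|_{L^2_1(E)}\lesssim h_T^{1/2}\|\nabla \v\|_{L^2_1(T)}$, and combining with the first step yields $h_T\cdot h_T^{1/2} = h_T^{3/2}$, as claimed.

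The main obstacle is making the weighted trace inequality rigorous with an $h_T$-uniform constant, since $\widehat T$ is not a simplex. The cleanest route is scaling: the map $x\mapsto x/h_T$ sends $T$ to a triangle of unit size touching the axis, with weight $r\mapsto h_T\hat r$. All three norms then scale by the same factor $h_T$ (times the appropriate powers of $h_T$ from the measures), so it suffices to prove the inequality on unit-size shape-regular triangles touching the axis. There a compactness argument over the finitely parametrized family of such triangles, together with the 3D trace theorem on the rotated bodies, gives a uniform constant. Type 1 and type 2 triangles should be treated separately, since the rotated bodies are topologically different: a cone-like body in one case and a solid of revolution containing an axis segment in the other.
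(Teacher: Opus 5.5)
Your chain of estimates (Cauchy--Schwarz with $\int_E r\ds\lesssim h_T^2$, a weighted trace inequality, then Lemma~\ref{lem:weighted_poincare}) is the same as the paper's. On type~2 triangles it is exactly the paper's argument. Your inequality $\|\v\|^2_{L^2_1(E)}\lesssim h_T^{-1}\|\v\|^2_{L^2_1(T)}+h_T\|\nabla\v\|^2_{L^2_1(T)}$ is the one the paper quotes from Copeland et al.\ (Lemma A.1) for type~2. Your derivation by rotation is valid there: a triangle with an edge on the axis revolves into a double cone or a cone with a conical cavity, and both are Lipschitz with character controlled by shape regularity.

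The gap is the type~1 case. Such a $T$ meets the axis only in the vertex $N_j$, so near $N_j$ it is a sector $\alpha_1<\alpha<\alpha_2$ with $-\pi/2<\alpha_1<\alpha_2<\pi/2$. Hence $\widehat T$ is locally $\{\tan\alpha_1\,\rho<z-z_j<\tan\alpha_2\,\rho\}$ with $\rho=(x^2+y^2)^{1/2}$. This is a solid torus pinched at one point, whose boundary near the apex consists of two conical surfaces touching only there. It is not a Lipschitz domain, so the scaled trace theorem you invoke is unavailable for the edges through $N_j$. The compactness argument in your last paragraph inherits the same problem, since it still rests on the 3D trace theorem on $\widehat T$.

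What is missing is control of the unweighted norm $\|\v\|_{L^2(T)}$. A trace argument near $N_j$ naturally produces this term, e.g.\ along arcs around $N_j$, whose length $\sim\rho$ cancels one power of the weight $r\sim\rho$. It is supplied by the scaled local Hardy-type bound $h_T\|\v\|^2_{L^2(T)}\lesssim\|\v\|^2_{L^2_1(T)}+h_T^2\|\nabla\v\|^2_{L^2_1(T)}$. This is a localized form of the case $\epsilon=1$ in Remark~\ref{rem:embedding_mr82}, and it follows from the one-dimensional inequality $\int_0^R f^2\,\mathrm{d}\rho\le 2Rf(R)^2+4\int_0^R\rho^2 (f')^2\,\mathrm{d}\rho$.

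With that bound, the simplest repair is the paper's route on type~1 triangles:
\begin{itemize}
\item apply the standard 2D scaled trace inequality to $r\v\in\bm{H}^1(T)$;
\item use $|\nabla(r\v)|\le r|\nabla\v|+|\v|$ and $r\le h_T$ to get $\|r\v\|^2_{L^2(E)}\lesssim\|\v\|^2_{L^2_1(T)}+h_T^2\|\nabla\v\|^2_{L^2_1(T)}+h_T\|\v\|^2_{L^2(T)}$;
\item absorb the last term with the Hardy bound;
\item conclude with $\int_E r\v\cdot\bm n\ds\le|E|^{1/2}\|r\v\|_{L^2(E)}$ and Lemma~\ref{lem:weighted_poincare}.
\end{itemize}
Your stronger weighted trace inequality can also be established on type~1 triangles, but only with this same extra ingredient.
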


\begin{proof}
  From \cite[Lem.~A.1]{copeland_mixed_2008}, we have the trace inequalities
  \begin{align*}
        \| r \v \|^2_{L^2(E)} & \lesssim \| \v \|^2_{L^2_1(T)} + h_T^2 \| \nabla \v \|_{L^2_1(T)}^2 && \text{on type 1 triangles},\\
        \| \v \|^2_{L_1^2(E)} & \lesssim h_T^{-1} \| \v \|^2_{L^2_1(T)} + h_T \| \nabla \v \|_{L^2_1(T)}^2 && \text{on type 2 triangles}.
  \end{align*} 
  The claim follows from a suitable Cauchy inequality, the trace inequalities above and standard scaling arguments.
\end{proof}

\begin{theorem}[$L^2_{-1}$ interpolation error estimate for $\Pi_{\mathrm{RT}0}^\mathrm{axi}$]
  \label{thm:modified_interpolation_estimate}
  Let $T \in \TR$. For any function $\v \in \bm{H}^1_1(T)$ with $v_r \in L^2_{-1}(T)$, it holds that 
  \begin{align*}
    \| r \v - \Pi_{\mathrm{RT}0}^\mathrm{axi}(r \v) \|_{L^2_{-1}(T)}
    \lesssim \begin{cases} 
    h_T \| \nabla \v \|_{L^2_1(T)} & \text{if } T \text{ is of type 2,}\\
    h_T \| \nabla \v \|_{L^2_1(T)} + h_T^{-\epsilon} \lvert r_3 - r_2 \rvert \| \v \|_{L^2_{-1 + 2\epsilon}(T)} & \text{if } T \text{ is of type 1},
   \end{cases}
  \end{align*}
  where $\epsilon > 0$ is arbitrary (having in mind that $\v \in L^2_{-1 + \epsilon}(T)$, see Remark~\ref{rem:embedding_mr82}).
  Globally, there holds 
  \begin{align*}
    \| h_\mathcal{T}^{-1 + \epsilon} \left(r \v - \Pi_{\mathrm{RT}0}^\mathrm{axi}(r \v)\right) \|_{L^2_{-1}(\Omega)}
    \lesssim \| \v \|_{\V}
    \quad \text{for all } \v \in \V.
  \end{align*}

Analogous results hold for $\Pi_{\mathrm{BDM}1}^\mathrm{axi}$ with $\epsilon = 0$, where the
local type 2 estimate also holds for type 1 triangles.
\end{theorem}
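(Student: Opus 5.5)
The local estimate will follow from a constant-subtraction argument combined with the preparatory lemmas. Fix $T \in \TR$ and let $\mathbf{C} \in \mathbf{P}_0(T)$ be the constant with $\int_T r(\v - \mathbf{C}) \,\mathrm{d}x = 0$, i.e.\ the $L^2_1(T)$-mean of $\v$. Set $\w := \v - \mathbf{C}$. By linearity of the local interpolator,
\begin{align*}
  r\v - \Pi_{\mathrm{RT}0}^\mathrm{axi}(r\v)
  = \big(r\w - \Pi_{\mathrm{RT}0}^\mathrm{axi}(r\w)\big) + \big(r\mathbf{C} - \Pi_{\mathrm{RT}0}^\mathrm{axi}(r\mathbf{C})\big).
\end{align*}
By Lemma~\ref{lem::consistency}, the second bracket vanishes on type~2 triangles and for $\Pi_{\mathrm{BDM}1}^\mathrm{axi}$.

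For the first bracket, I would bound the two terms separately. First, $\| r\w \|_{L^2_{-1}(T)} = \| \w \|_{L^2_1(T)} \lesssim h_T \| \nabla \v \|_{L^2_1(T)}$ by Lemma~\ref{lem:weighted_poincare}. Second, write $\Pi_{\mathrm{RT}0}^\mathrm{axi}(r\w)|_T = \sum_{E \in \mathcal{E}(T)} \big(\int_E r\w\cdot\bm{n}\ds\big)\,\psi_E$, where $\psi_E$ is the modified function $\psi_E^{\mathrm{R}}$ for $E \in \mathcal{E}_R$ and $\psi_E^{\mathrm{RT}0}$ otherwise. On type~2 triangles the third edge lies on $\GammaR$, where $r=0$, so its coefficient vanishes. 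Estimate \eqref{eqn:coefficient_estimate} gives coefficients of size $h_T^{3/2}\|\nabla\v\|_{L^2_1(T)}$, and Lemma~\ref{lem:props_mod_basis}(iv),(v) gives $\|\psi_E\|_{L^2_{-1}(T)} \lesssim h_T^{-1/2}$. Multiplying yields $h_T\|\nabla \v\|_{L^2_1(T)}$. For BDM the same argument works, adding the $P_1$ moments on the edge away from the axis, bounded via a weighted trace estimate as in \eqref{eqn:coefficient_estimate}, together with (vi).

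The type~1 constant term is the main obstacle. By \eqref{eqn:constant_reconstruction_error_estimate} and (vi) it is bounded by $|E|\,|r_2-r_3|\,|\mathbf{C}|\,h_T^{-1/2} \lesssim h_T^{1/2}|r_2-r_3|\,|\mathbf{C}|$, so $|\mathbf{C}|$ must be controlled. Since $\mathbf{C} = \int_T r\v / \int_T r$, Cauchy--Schwarz with weight $r^{-1+2\epsilon}$ gives
\begin{align*}
  |\mathbf{C}| \le \frac{\|\v\|_{L^2_{-1+2\epsilon}(T)}\,\big(\int_T r^{3-2\epsilon}\big)^{1/2}}{\int_T r}.
\end{align*}
On a type~1 triangle $r \lesssim h_T$, so this is at most $h_T^{3/2-\epsilon}|T|^{1/2}/\int_T r$. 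I then need $\int_T r \gtrsim h_T|T|$. This holds under shape regularity: the vertex on the axis is the only point with $r=0$, so the centroid has $r \approx h_T$, and $\int_T r$ equals $|T|$ times the $r$-coordinate of the centroid. Hence $|\mathbf{C}| \lesssim h_T^{-1/2-\epsilon}\|\v\|_{L^2_{-1+2\epsilon}(T)}$, and the constant term is bounded by $h_T^{-\epsilon}|r_2-r_3|\,\|\v\|_{L^2_{-1+2\epsilon}(T)}$, as claimed.

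For the global statement, I would multiply the local estimates by $h_T^{-1+\epsilon}$ and sum over $T$. On $\TR$ the gradient terms contribute $h_T^{\epsilon}\|\nabla\v\|_{L^2_1(T)} \lesssim \|\nabla \v\|_{L^2_1(T)}$. The type~1 constant terms contribute $h_T^{-1}|r_2-r_3|\,\|\v\|_{L^2_{-1+2\epsilon}(T)} \lesssim \|\v\|_{L^2_{-1+2\epsilon}(T)}$, and their sum is bounded by $\|\v\|_{\V}$ via Remark~\ref{rem:embedding_mr82} with $2\epsilon$ in place of $\epsilon$. On $T \notin \TR$ we have $r \approx$ const, so $L^2_{-1}$ and weighted norms are equivalent up to local scaling, and the standard RT/BDM estimate gives $\|r\v - \Pi(r\v)\|_{L^2_{-1}(T)} \lesssim h_T\|\nabla(r\v)\|_{L^2_{-1}(T)}$. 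By the chain rule this is at most $h_T\big(\|\nabla\v\|_{L^2_1(T)} + \|\v\|_{L^2_{-1}(T)}\big)$. The $\|v_r\|_{L^2_{-1}}$ part is contained in the $\V$-norm. The $v_z$ part needs care, since only $\V$ is available; a $\|v_z\|_{L^2_{-1}}$ term is not directly controlled. I would instead bound it as $h_T r_T^{-1}\|v_z\|_{L^2_1(T)}$ with $r_T := \min_T r$, and use $h_T \lesssim r_T$ for elements off the axis (shape regularity plus a mesh layer), giving $\|v_z\|_{L^2_1(T)}$, which is controlled by Friedrichs. Using $h_T^{\epsilon} \le h^\epsilon \lesssim 1$, summing gives $\lesssim \|\v\|_{\V}$. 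For BDM the same argument holds with $\epsilon=0$, since no constant remainder appears.
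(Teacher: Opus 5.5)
Your local estimates and the on-axis part of the global bound are correct and follow the paper's route. You use the same split around the $r$-weighted mean, Lemma~\ref{lem:weighted_poincare}, \eqref{eqn:coefficient_estimate} with Lemma~\ref{lem:props_mod_basis}, and Lemma~\ref{lem::consistency}. Your weighted Cauchy--Schwarz bound $|\mathbf{C}|\lesssim h_T^{-1/2-\epsilon}\|\v\|_{L^2_{-1+2\epsilon}(T)}$ is equivalent to the paper's bound through $\|\v\|_{L^2_1(T)}$; the needed $\int_T r \gtrsim h_T|T|$ follows from $|T|\le\tfrac12(r_2+r_3)h_T$ and shape regularity.

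The gap is on the elements $T\notin\TR$. There you bound $h_T\|v_z\|_{L^2_{-1}(T)}\le h_T r_T^{-1}\|v_z\|_{L^2_1(T)}\lesssim\|v_z\|_{L^2_1(T)}$. This spends the factor $h_T$, so after multiplying by $h_T^{-1+\epsilon}$ the contribution is $h_T^{-1+\epsilon}\|v_z\|_{L^2_1(T)}$, not $h_T^{\epsilon}(\dots)$. Friedrichs' inequality does not control its sum uniformly in $h$. Keeping the factor instead gives $h_T^{\epsilon}r_T^{-1}\|v_z\|_{L^2_1(T)}$, which for $r_T\sim h_T$ is again not bounded by $\|v_z\|_{L^2_1(T)}$.

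What you must control is $\sum_{T\notin\TR}h_T^{2\epsilon}\|v_z\|^2_{L^2_{-1}(T)}$. Since $v_z$ need not vanish on $\GammaR$, $\|v_z\|_{L^2_{-1}}$ is not bounded by $\|\v\|_{\V}$. For $v_z\equiv 1$ near part of the axis, the off-axis part of $\|v_z\|^2_{L^2_{-1}}$ grows like $\log(1/h)$. This is exactly why the $\mathrm{RT}_0$ result carries the $\epsilon$-loss. The fix is to trade only $r^{-2\epsilon}$ of the weight:
\[
h_T^{\epsilon}\|v_z\|_{L^2_{-1}(T)}\le (h_T/r_T)^{\epsilon}\|v_z\|_{L^2_{-1+2\epsilon}(T)},
\]
then sum and apply Remark~\ref{rem:embedding_mr82} with $2\epsilon$. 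The paper does this via $\|\cdot\|_{L^2}\le r_{\max}^{1/2-\epsilon}\|\cdot\|_{L^2_{-1+2\epsilon}}$.

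For the same reason, your off-axis argument does not support the claim that the $\mathrm{BDM}_1$ case holds ``with $\epsilon=0$ since no constant remainder appears.'' With $\epsilon=0$, the term $h_T\|v_z\|_{L^2_{-1}(T)}$, weighted by $h_T^{-1}$, sums to $\|v_z\|_{L^2_{-1}}$ over the off-axis elements. That is not bounded by $\|\v\|_{\V}$ uniformly in $h$. You need exact reproduction of linear fields away from the axis as well. On $T\notin\TR$, subtract the unweighted mean $\bar\v$, so that $\Pi_{\mathrm{BDM}1}^\mathrm{axi}(r\bar\v)|_T=r\bar\v$ since $r\bar\v$ is linear. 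Then estimate $\v-\bar\v$ with the unweighted Poincar\'e inequality together with $r_{\max}/r_{\min}\simeq 1$. This removes the zeroth-order term entirely, and it is the additional step in the paper's proof.
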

\begin{proof} We start with the estimate for $\Pi_{\mathrm{RT}0}^\mathrm{axi}$, the $\Pi_{\mathrm{BDM}1}^\mathrm{axi}$ case is discussed at the end of the proof.
  Let $T \in \TR$ be arbitrary. After adding and subtracting $\overline{\v} := \int_T r \v \mathit{dx} / \int_T r \mathit{dx} \in \mathbf{P}_0(T)$
    one obtains with two triangle inequalities
    \begin{multline} \label{eq::interpolationestimate_split}
       \| r \v - \Pi_{\mathrm{RT}0}^\mathrm{axi}(r \v) \|_{L^2_{-1}(T)}\\
       \leq \| \v - \bar \v \|_{L^2_{1}(T)} + \| \Pi_{\mathrm{RT}0}^\mathrm{axi}(r (\v - \bar \v)) \|_{L^2_{-1}(T)}
       + \| r \bar \v - \Pi_{\mathrm{RT}0}^\mathrm{axi}(r \bar \v) \|_{L^2_{-1}(T)}.
    \end{multline}
    The first term can be estimated with the Poincar\'e inequality
    from Lemma~\ref{lem:weighted_poincare} and the second by
    a combination of \eqref{eqn:coefficient_estimate} and properties (iv) - (vi) from Lemma~\ref{lem:props_mod_basis}. On type 2 triangles, the last term vanishes by Lemma~\ref{lem::consistency}. On type 1 triangles, the consistency estimate \eqref{eqn:constant_reconstruction_error_estimate} of Lemma~\ref{lem::consistency} yields
     \begin{align*} 
       \| r \bar \v - \Pi_{\mathrm{RT}0}^\mathrm{axi}(r \bar \v) \|_{L^2_{-1}(T)}
       \lesssim h_T^{1/2} \lvert r_3 - r_2 \rvert \lvert \bar \v \rvert,
     \end{align*}
    recalling $\vert E \vert \simeq h_T$ and $\| \psi_E^{\mathrm{BDM}1} \|_{L^2_{-1}(T)} \lesssim h_T^{-1/2}$ by Lemma~\ref{lem:props_mod_basis}. 
    Estimating the constant by
    \begin{align*}
       \lvert \bar \v \rvert \lesssim h_T^{-3/2} \| \v \|_{L^2_1(T)}
       \lesssim h_T^{-1/2 - \epsilon} \| \v \|_{L^2_{-1 + 2\epsilon}(T)},
    \end{align*}
    for any $\epsilon > 0$ gives
     \begin{align}\label{eq:RT0:local_interpolationEstimate}
       \| r \bar \v - \Pi_{\mathrm{RT}0}^\mathrm{axi}(r \bar \v) \|_{L^2_{-1}(T)}
       \lesssim h_T^{-\epsilon} \lvert r_3 - r_2 \rvert \| \v \|_{L^2_{-1 + 2\epsilon}(T)},
    \end{align}
    what concludes the local estimate. 

    For the global estimate, we also need an estimate on triangles $T \in \mathcal{T} \setminus \TR$ away from the rotation axis. By quasi uniformity of the mesh we have on $T$ that
    \begin{align*}
    0 < r_\text{min} \le r \le r_\text{max} \leq r_\text{min} + h_T,
    \end{align*}
    where $r_\text{min} = \min_{x \in T} r(x)$ and $r_\text{max} = \max_{x \in T} r(x)$.
    Recalling the properties of the standard RT0 interpolation operator, this gives
    \begin{align*} 
      \Vert r \v - \Pi_{\mathrm{RT}0}^\mathrm{axi}(r \v) \Vert_{L^2_{-1}(T)} \le r_\text{min}^{-1/2} \Vert (1-\Pi_{\mathrm{RT}0})(r \v) \Vert_{L^2(T)} \le h_T r_\text{min}^{-1/2} \Vert \nabla (r \v) \Vert_{L^2(T)}. 
    \end{align*}
    The chain rule and $\| \cdot \|_{L^2} \leq r_{\max}^{1/2 - \epsilon} \| \cdot \|_{L^2_{-1 + 2\epsilon}}$ gives
    \begin{align*}
       h_T r_{\text{min}}^{-1/2} \Vert \nabla (r \v) \Vert_{L^2(T)} \le h_T r_{\text{min}}^{-1/2} r_{\text{max}}^{1/2 - \epsilon} \left( \| \nabla \v \|_{L^2_{1+2\epsilon}(T)} + \Vert \v \Vert_{L^2_{-1+2\epsilon}(T)}\right).
    \end{align*}
    Since $r_{\text{max}}/r_{\text{min}} \simeq 1$, we have that $h_T r_{\text{min}}^{-1/2} r_{\text{max}}^{1/2 - \epsilon} \simeq h_T^{1-\epsilon}$. 
    Summing over all triangles $T \in \mathcal{T} \setminus \TR$  and using the embedding from Remark~\ref{rem:embedding_mr82} results in
    \begin{align*}
      \Vert h_\mathcal{T}^{-1 + \epsilon} (r \v - \Pi_{\mathrm{RT}0}^\mathrm{axi}(r \v)) \Vert_{L^2_{-1}(\Omega)} \lesssim \Vert \nabla \v \Vert_{L^2_1(\Omega)} + \Vert \v \Vert_{L^2_{-1+2\epsilon}(\Omega)} \lesssim \| \v \|_{H^1_1(\Omega)} \lesssim \Vert \v \Vert_{\V},
    \end{align*}
    which concludes the proof. 

    For $\Pi_{\mathrm{BDM}1}^\mathrm{axi}$, by Lemma~\ref{lem::consistency}, constants are exactly interpolated on all elements, therefore the last term in \eqref{eq::interpolationestimate_split} also vanishes for type 1 triangles. Furthermore, the proof can be mimicked when considering $T \in \mathcal{T} \setminus \TR$. For this, instead of the weighted Poincar\'e inequality from Lemma \ref{lem:weighted_poincare}, a similar inequality also holds 
    for functions $\v \in \bm{H}^1_1(T)$ with $\int_T v \mathit{dx} = 0$ (where $T \in \mathcal{T} \setminus \TR$). This follows immediately by the standard Poincar\'e inequality in $L^2$ and
    \begin{align*}
       \| \v \|_{L^2_1(T)} \leq r_{\max}^{1/2} \| \v \|_{L^2(T)}
       \lesssim r_{\max}^{1/2} h_T \| \nabla \v \|_{L^2(T)}
       \lesssim \left(\frac{r_{\max}}{r_{\min}}\right)^{1/2} h_T \| \nabla \v \|_{L^2_1(T)}.
    \end{align*}
    Thus, we obtain the claim with $\epsilon = 0$.
\end{proof}

Finally, the modified interpolation estimate allows us to show the following improved consistency error estimate for $ \Pi_{\mathrm{RT}0}^\mathrm{axi}$ (and analogously for $\Pi_{\mathrm{BDM}1}^\mathrm{axi}$).

\begin{theorem}\label{thm:recons_consistency_error_modified}
If $\bf \in L^2_{1}(\Omega)$ (and hence $\mathbb{P} (\Delta_\text{axi} \u) \in L^2_1(\Omega)$), the consistency error \eqref{eqn:consistency_error}
for $\Pi \in \lbrace \Pi_{\mathrm{RT}0}^\mathrm{axi}, \Pi_{\mathrm{BDM}1}^\mathrm{axi}\rbrace$ can be estimated by
  \begin{align*}
    \| \mathbb{P} (\Delta_\text{axi} \u) \circ (\mathbb{I} - \Pi) \|_{\V_{h,0}^\star}
    \lesssim \| h_\mathcal{T}^{1-\epsilon} \mathbb{P} (\Delta_\text{axi} \u) \|_{L^2_1},
  \end{align*}
  where $\epsilon$ can be chosen according to Theorem~\ref{thm:modified_interpolation_estimate}.
\end{theorem}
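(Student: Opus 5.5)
The plan mirrors the proof of Theorem~\ref{thm:recons_consistency_error}, but it pairs the weighted norms so that the weights cancel. For $\w_h \in \V_{h,0}$ I would split the integrand as $r^{1/2} \cdot r^{-1/2}$ and insert the mesh-size factors $h_\mathcal{T}^{1-\epsilon}$ and $h_\mathcal{T}^{-1+\epsilon}$. The weighted Cauchy--Schwarz inequality then gives
\begin{align*}
(\mathbb{P}(\Delta_\text{axi}\u), r\w_h - \Pi(r\w_h))_{L^2}
\le \| h_\mathcal{T}^{1-\epsilon}\mathbb{P}(\Delta_\text{axi}\u)\|_{L^2_1}\,
\| h_\mathcal{T}^{-1+\epsilon}(r\w_h - \Pi(r\w_h))\|_{L^2_{-1}}.
\end{align*}
The first factor is finite exactly under the assumption $\bf \in L^2_1(\Omega)$. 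To see this, recall from Section~\ref{sec:helmholtz_projection} that $\mathbb{P}(\Delta_\text{axi}\u)$ is represented by $-\nu^{-1}\mathbb{P}\bf$, and $\mathbb{P}$ is the $L^2_1$-orthogonal projection, so $\|\mathbb{P}\bf\|_{L^2_1}\le\|\bf\|_{L^2_1}$.

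For the second factor I invoke the global estimate of Theorem~\ref{thm:modified_interpolation_estimate} with $\v=\w_h$. This is legitimate because $\V_{h,0}\subset\V$: Bernardi--Raugel functions in $\V_h$ are continuous and piecewise polynomial with $v_r=0$ on $\GammaR$, and hence belong to $\V$. The theorem yields $\| h_\mathcal{T}^{-1+\epsilon}(r\w_h-\Pi(r\w_h))\|_{L^2_{-1}}\lesssim\|\w_h\|_{\V}$. Dividing by $\|\w_h\|_{\V}$ and taking the supremum over $\V_{h,0}$ gives the claim. For $\Pi_{\mathrm{BDM}1}^\mathrm{axi}$ the same argument works with $\epsilon=0$, using the last sentence of Theorem~\ref{thm:modified_interpolation_estimate}.

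I expect the only delicate point to be the use of the global interpolation estimate itself. It needs the global interpolant to coincide elementwise with the local interpolants treated in that theorem. This holds because $r\w_h$ is continuous, so its normal moments on edges are single-valued, as remarked after Definition~\ref{def::modifiedinterpolation}. With that settled, the remaining step is a single Cauchy--Schwarz application.
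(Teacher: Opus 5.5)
Your proof is correct and follows the paper's argument. It applies the weighted Cauchy--Schwarz inequality to pair $\| h_\mathcal{T}^{1-\epsilon} \mathbb{P}(\Delta_\text{axi}\u)\|_{L^2_1}$ with $\| h_\mathcal{T}^{-1+\epsilon}(1-\Pi)(r\w_h)\|_{L^2_{-1}}$, then bounds the latter by the global estimate of Theorem~\ref{thm:modified_interpolation_estimate} for $\w_h \in \V_{h,0} \subset \V$, with $\epsilon = 0$ for $\Pi_{\mathrm{BDM}1}^\mathrm{axi}$.
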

\begin{proof}
    This follows directly by applying the Cauchy-Schwarz inequality and Theorem~\ref{thm:modified_interpolation_estimate}, i.e.,
    \begin{align*}
(\mathbb{P} (\Delta_\text{axi} \u), r \w_h - \Pi (r \w_h))_{L^2}
  & \leq \| h_\mathcal{T}^{1-\epsilon} \mathbb{P} (\Delta_\text{axi} \u) \|_{L^2_1} \| h_\mathcal{T}^{-1+\epsilon} (1-\Pi) (r \w_h) \|_{L^2_{-1}}\\
  & \lesssim \| h_\mathcal{T}^{1-\epsilon} \mathbb{P} (\Delta_\text{axi} \u) \|_{L^2_1} \|\w_h \|_{\V}.
  \end{align*}
\end{proof}

\section{Numerical examples}
\label{sec:numerical_examples}

This section compares a classical Bernardi--Raugel (BR) finite element
discretization with a pressure-robustly modified discretization
with $\Pi \in \lbrace \Pi_{\mathrm{RT}0}^\mathrm{axi}, \Pi_{\mathrm{BDM}1}^\mathrm{axi}\rbrace$.
If not mentioned otherwise the right-hand side functionals and boundary data integrals are evaluated with a quadrature rule of order 10,
while the bilinear form $a$ is evaluated with a quadrature rule of order 4.
The used grids are unstructured triangular grids without any refinement
close to the rotation axis.

\subsection{Example 1: linear stagnation flow} 

As a first example, we consider the axisymmetric Stokes problem with the exact solutions and data
\begin{align*}
    p_{\text{ex}} = r^{7/4} + z^2,
    \quad \u_{\text{ex}} & = [r, -2z]^T, \quad \bf = -\nu \Delta_\text{axi} \u_{\text{ex}} + \nabla p_{\text{ex}}.
\end{align*}
Observe, that $\Delta_\text{axi} \u_{\text{ex}} = \bm{0}$
and $\nabla p_{\text{ex}} \in L^2(\Omega)$, and in particular, the exact velocity solution is in the discrete ansatz space of the \BR finite element method.

\begin{figure}[!htbp]
  \centering
  \includegraphics[width=\textwidth]{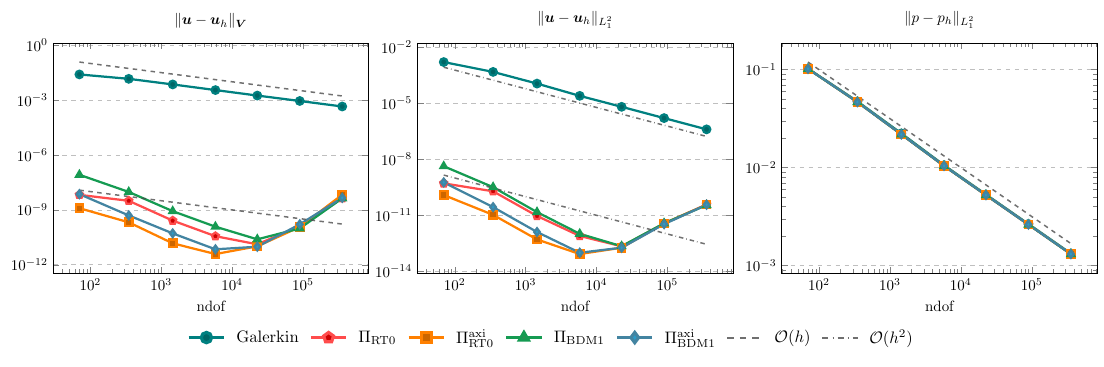}
  \caption{\label{fig:example1}Error convergence histories in Example 1 for $\nu = 1$.}
\end{figure}

Figure~\ref{fig:example1} shows that indeed all variants that use
one of the suggested reconstruction operators, have errors close to zero,
while the classical \BR finite element method
without the reconstruction operator shows errors in the range
of the pressure best-approximation error. Note that this error scales
with $\nu^{-1}$, which is demonstrated in the next examples.

\subsection{Example 2: smooth example}
\label{sec:example2}

This example considers smooth data, where we expect that all methods to show optimal convergence rates, namely
\begin{align*}
    p_{\text{ex}} = \sin (\pi(r^2+z^2)),
    \quad \u_{\text{ex}} =
    \begin{pmatrix}
      r^3 \sin z\\
      4r^2 \cos z
    \end{pmatrix}, \quad \bf = -\nu \Delta_\text{axi} \u_{\text{ex}} + \nabla p_{\text{ex}}.
\end{align*}

\begin{figure}[!htbp]
  \centering
  \includegraphics[width=0.9\textwidth]{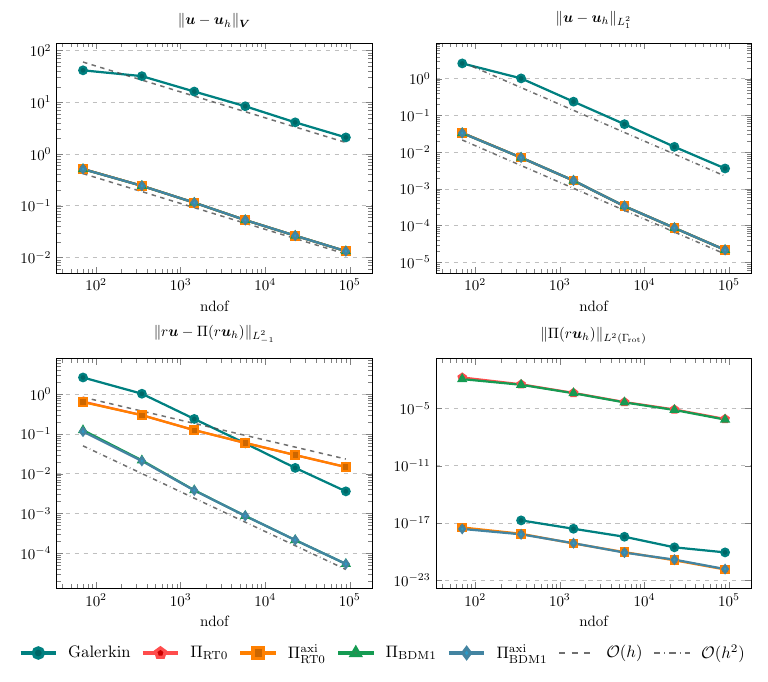}
  \caption{\label{fig:example2}Error convergence histories in Example 2 for $\nu = 10^{-3}$.}
\end{figure} 

\begin{figure}[!htbp]
  \centering
  \includegraphics[width=0.97\textwidth]{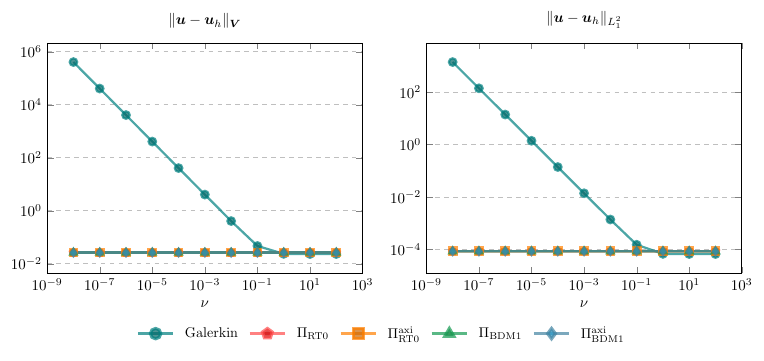}
  \caption{\label{fig:example2_variable_nu}Error convergence histories in Example 2 with respect to variable $\nu$ on a fixed mesh with about $22.000$ degrees of freedom.}
\end{figure}

Figure~\ref{fig:example2} shows the convergence history for several
norms of interest for $\nu = 10^{-3}$. In the energy and $L^2_1$
error of $\u - \u_h$ all variants of the reconstruction operators perform equally well and the classical \BR finite element method shows larger errors by about two orders of magnitude.
As expected, the $L^2_{-1}$ error of $r\u - \Pi(r\u_h)$ converges only linearly for $\Pi_{\mathrm{RT}0}$ and $\Pi_{\mathrm{RT}0}^\mathrm{axi}$, while the $\mathrm{BDM}1$ variants show optimal quadratic convergence and much smaller errors than the Galerkin method. 
This is of relevance when the reconstruction operator is used as a post-processing in coupled transport \cite{FMR26}.
The last subplot for the norm $\| \Pi(r\u_h) \|_{L^2(\GammaR)}$ confirms that the modified reconstructed functions really vanish at the rotation axis. However, at least in this smooth example, this property seems to have no qualitative impact on the convergence of the other norms.

Figure~\ref{fig:example2_variable_nu} shows errors on a fixed mesh but for
different values of $\nu$. The plots confirm the locking behavior of the classical method
in the sense that at some point the errors scale like $\nu^{-1}$. All modified methods show no locking behavior even for very small viscosities $\nu$.

\subsection{Example 3: data in $L^2_1(\Omega) \setminus L^2(\Omega)$}

\begin{figure}[!htbp]
  \centering
  \includegraphics[width=0.9\textwidth]{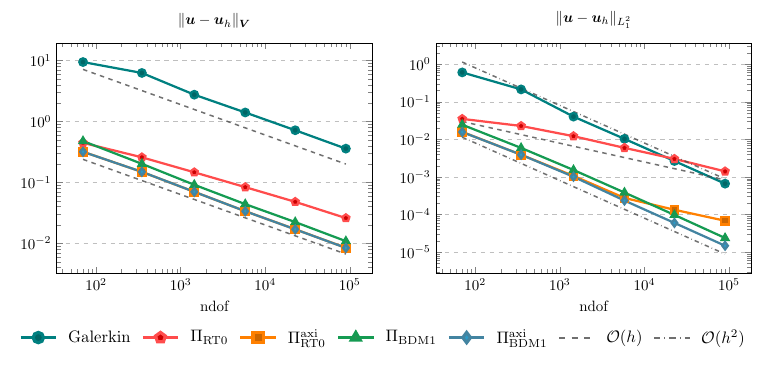}
  \caption{\label{fig:example3Conv}Error convergence histories in Example 3 for $\nu = 10^{-3}$.}
\end{figure}

This example studies the axisymmetric Stokes problem for
\begin{align*}
  p_{\text{ex}} = r^{1/2} - \frac{8}{9},
  \quad \u_{\text{ex}} =
  \begin{pmatrix}
    r^{2.1}\\
    -3.1 r^{1.1} z
  \end{pmatrix}
  , \quad \bf = -\nu \Delta_\text{axi} \u_{\text{ex}} + \nabla p_{\text{ex}}.
\end{align*}
Here, it holds $\u_{\text{ex}} \in \V$ and $p \in L^2_1(\Omega)$, but $\bf \in L^2_1(\Omega) \setminus L^2(\Omega)$.
In other words, the assumptions for the consistency error estimates in Theorem~\ref{thm:recons_consistency_error}
are not satisfied, while the ones in Theorem~\ref{thm:recons_consistency_error_modified} are satisfied.
In Figure~\ref{fig:example3Conv}, we compare the convergence rates for the fixed parameter $\nu=10^{-3}$ in the velocity norms.
This time, the errors with the classical reconstruction
$\Pi_{\mathrm{RT}0}$ are much larger than of the other
reconstruction operators. Also with
$\Pi_{\mathrm{BDM}1}$ the energy errors are not as small as
with the modified reconstruction operators $\Pi_{\mathrm{RT}0}^\mathrm{axi}$ and $\Pi_{\mathrm{BDM}1}^\mathrm{axi}$. Interestingly,
the $L^2_1$ velocity error converges suboptimally for
$\Pi_{\mathrm{RT}0}^\mathrm{axi}$. This was not the case in the smooth example.

\begin{figure}[!htbp]
  \centering
  \includegraphics[width=0.9\textwidth]{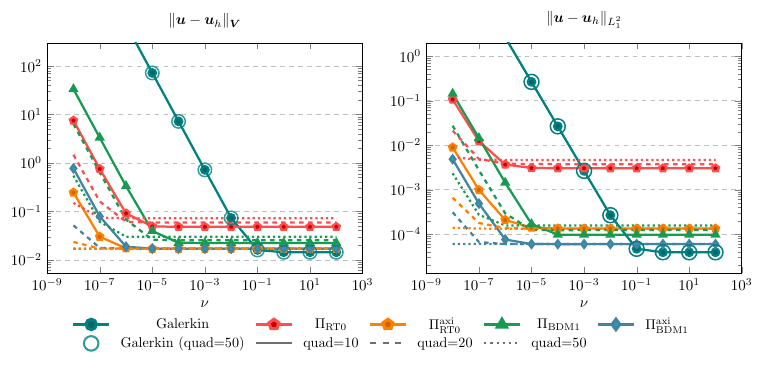}
  \caption{\label{fig:example3}Error convergence histories in Example 3 with respect to variable $\nu$ on a fixed mesh for quadrature rules of increasing orders $10$, $20$, and $50$.}
\end{figure}

Figure~\ref{fig:example3} shows the dependency of the error norms
on the viscosity $\nu$ on a fixed mesh. While the results are very
similar for the classical method, there are some interesting differences
among the methods with reconstruction operator. First, all methods
show an increase of the error that scales with $\nu^{-1}$ below some
critical value for $\nu$. This is related to the quadrature error
in the right-hand side and an expected behavior that does not
contradict pressure robustness, since Theorem~\ref{thm:apriori_error_estimate} assumes exact evaluation of all integrals. 
In the previous example the data was very smooth, so
the quadrature error was small. Due to the lower regularity of the data in this example the quadrature error is much more important.

However, it is interesting that the critical value for the modified reconstruction operators is much smaller than that for the classical reconstruction operators for the same quadrature rule.
Another interesting observation is that the error for the unmodified reconstruction operators increases when the quadrature order is increased and quadrature points move closer to the rotation axis. This behavior seems to be related to the missing vanishing-on-axis-property of the unmodified reconstruction operators and the missing $L^2_1$-regularity of the data.
The modified method and the classical method do not show an increase of the error when the quadrature order is increased.
More importantly, it can be observed that the classical reconstruction
operator $\Pi_{\mathrm{RT}0}$ yields much larger errors than the
Galerkin method for moderate $\nu$.

\bibliographystyle{siamplain}
\bibliography{main}

\end{document}